\documentclass[12pt]{article}
\usepackage{amsmath,amssymb,graphicx,amsthm,float}

\newtheorem{theorem}{Theorem}[section]
\newtheorem*{theorem:repeat}{\tref{butterflystab}}
\newtheorem*{theorem:repeatmain}{\tref{main}}
\newtheorem{lemma}[theorem]{Lemma}
\newtheorem{corollary}[theorem]{Corollary}
\newtheorem{problem}[theorem]{Problem}
\newtheorem{prop}[theorem]{Proposition}

\newcommand\tref[1]{Theorem~\ref{thm:#1}}
\newcommand\cref[1]{Corollary~\ref{cor:#1}}

\newcommand{\w}{{\rm w}}

\newcommand\cF{{\mathcal F}}
\newcommand\cG{{\mathcal G}}
\newcommand\cH{{\mathcal H}}


\textheight=8in \textwidth=6.5in \topmargin=0.3in \oddsidemargin=0in
\evensidemargin=0in

\begin{document}
\title{\vspace{-2.9cm} ~~ \\ Domination game on uniform hypergraphs}

\author{
Csilla Bujt{\'a}s $^{a}$
\and
Bal{\'a}zs Patk{\'o}s $^{b}$
\and
Zsolt Tuza $^{a,b}$
\and
M{\'a}t{\'e} Vizer $^{b}$
}

\maketitle

\begin{center}
$^a$ Faculty of Information Technology, University of Pannonia, Veszpr\'em, Hungary\\
\medskip
$^b$ Alfr\'ed R\'enyi Institute of Mathematics, Hungarian Academy of Sciences, Budapest, Hungary\\
\medskip
\small \texttt{bujtas,tuza@dcs.uni-pannon.hu, patkos@renyi.hu, vizermate@gmail.com}
\medskip
\end{center}

\begin{abstract}
In this paper we introduce and study the domination game on hypergraphs. This is played on a hypergraph $\cH$ by two players, namely Dominator and Staller, who alternately select vertices such that each selected vertex enlarges the set of vertices dominated so far. The game is over if all vertices of $\cH$ are dominated. Dominator aims to finish the game as soon as possible, while Staller aims to delay the end of the game. If each player plays optimally and Dominator starts, the length of the game is the invariant `game domination number' denoted by $\gamma_g(\cH)$. This definition is the generalization of the domination game played on graphs and it is a special case of the transversal game on hypergraphs. After some basic general results, we establish an asymptotically tight upper bound on the game domination number of $k$-uniform hypergraphs. In the remaining part of the paper we prove that $\gamma_g(\cH) \le 5n/9$ if $\cH$ is a 3-uniform hypergraph of order $n$ and does not contain isolated vertices. This also implies the following new result for graphs: If $G$ is an isolate-free graph on $n$ vertices and each of its edges is contained in a triangle, then $\gamma_g(G) \le 5n/9$.

\end{abstract}

\noindent
{\bf Keywords:} Domination game; uniform hypergraphs; game domination number  \\

\noindent
{\bf AMS Subj.\ Class.\ (2010)}: 05C69, 05C76

\section{Introduction}

For a graph $G = (V, E)$ and for a vertex $v \in V$, the \textit{open
neighborhood} $N(v)$ of $v$ is the set of all vertices adjacent to $v$, and its \textit{closed
neighborhood} is $N[v] := N(v) \cup \{v\}$. Each vertex \textit{dominates} itself and its neighbors, moreover a set $S \subseteq V$ dominates the vertices contained in $N[S] :=
\bigcup_{v \in S} N[v]$. A \textit{dominating set} of $G$ is a subset $D$ of $V$ which dominates all vertices of the graph; that is $N[D] = V$. The minimum cardinality of a
dominating set, denoted by $\gamma(G)$, is termed the \textit{domination number} of $G$.

\vspace{2mm}

Domination is a well-studied subject in graph theory, with many related
applications. A general overview can be found in \cite{HHS1998}.

\vspace{2mm}

Recently Bre\v{s}ar, Klav\v{z}ar and Rall~\cite{BKR-SIAM} introduced the so-called \textit{domination game}. It is played on a graph $G=(V,E)$ by two players, \textit{Dominator} and \textit{Staller}. They take turns choosing a vertex from $V$ such that at least one new vertex must be dominated in each turn, that we call a \textit{legal move}. The game ends when no more legal moves can be taken. Note that the set of chosen vertices forms a dominating set. In this game Dominator's aim is to finish as soon as possible (so to finish the game with a small dominating set), while Staller's aim is to delay the end of the game. 
The \textit{game domination number} is the number of turns in the
game when the first turn is Dominator's move and both players play optimally. 
For recent results on this topic see \cite{BDKK-2017,BKR,B1,B2,DKR-2015,HK,KWZ,K-2014,S}.

\vspace{2mm}

The domination game belongs to the growing family  of
\textit{competitive  optimization} graph  and  hypergraph  games.   
Competitive  optimization variants of coloring \cite{KK2009,TZ2014}, list-colouring \cite{BST2007}, transversal \cite{BHT2}, matching \cite{CKOW2013}, Ramsey theory \cite{GHK2004},
and others \cite{PV2014} have been extensively investigated.

\subsection*{Domination in hypergraphs}

Despite that domination is a well-investigated notion in graph theory, domination in hypergraphs is a relatively new subject. It was introduced in~\cite{A2007}; for more recent results and references see~\cite{BHT, BPTV2017, HL-2012, HL-2016,KLDS}. 

\vspace{2mm}

We say that in a hypergraph $\cH=(V(\cH), E(\cH))$, two different vertices $u,v \in V(\cH)$ are \emph{adjacent} if there is an edge $e \in E(\cH)$ with $\{u,v\}\subseteq e$. The \emph{open neighborhood} of a vertex $v \in V(\cH)$, denoted by $N_{\cH}(v)$ (or
simply $N(v)$ if $\cH$ is clear from the context), is the set of all
vertices that are adjacent to $v$. The \emph{closed neighborhood} of $v \in V(\cH)$ is the set $N[v]:=N(v)\cup \{v\}$, and if $S \subseteq V(\cH)$, its closed neighborhood is $N[S]= \bigcup_{v \in S} N[v]$. We say that $S \subset V(\cH)$ is a \textit{dominating set}, if $N[S] = V(\cH)$; and the domination number $\gamma(\cH)$ of $\cH$ is the minimum cardinality of a dominating set.

\vspace{2mm}

\noindent
\textbf{Domination game on hypergraphs.}
We define the \textit{domination game on hypergraphs} analogously to that on graphs. Given a hypergraph $\cH$, in the domination game  two players, namely Dominator and Staller, take turns choosing a vertex of $\cH$. We denote by $v_1,\dots, v_d$ the sequence of vertices selected by the players in the game and denote by $D_i$ the set $\{v_j: 1 \le j \le i\}$.  The sequence $v_1,\dots, v_d$ defines a legal dominating game if and only if  $N[v_{i}]\setminus N[D_{i-1}]\neq \emptyset$ for all $2\le i\le d$ and $D_d$ is a dominating set of $\cH$. Dominator's goal is to finish the game as soon as possible, while Staller's goal is to delay the end. When both players play optimally and Dominator (resp.\ Staller) starts the game, the uniquelly defined length of the game is the \textit{game domination number} $\gamma_g(\cH)$ (resp.\ the \textit{Staller-start game domination number} $\gamma_g'(\cH)$).

\subsection*{Structure of the paper}

In Section 2 we introduce related notions, state connection of the domination game on hypergraphs with the transversal game on hypergraphs and with domination game on certain graphs. In Section 3 we state our results. In Section 4 we prove our theorem on the asymptotics of the game domination number of $k$-uniform hypergraphs.
In Section 5 we prove an upper bound on the game domination number of 3-uniform hypergraphs. Finally in Section 6 we pose some open problems.

\vskip 0.3truecm

We will use the following standard notation: for any positive integer $n$, we denote by $[n]$ the set $\{1,2,\dots,n\}$ of the first $n$ positive integers, and for any set $S$ we use $\binom{S}{k}$ to denote the set $\{T: T\subset S, |T|=k\}$ of all $k$-element subsets of $S$.

\section{Preliminaries}

For a hypergraph $\cH=(V(\cH),E(\cH))$, we associate the \textit{closed neighborhood hypergraph} $CNH(\cH)$ to $\cH$, defined on the same set $V(\cH)$ of vertices, by setting the edges as $$E(CNH(\cH)):= \{ N_\cH[x] : x \in V(\cH)\}.$$  Let us note that these hypergraphs are in fact multi-hypergraphs, since different vertices may have the same neighborhood.

\vspace{2mm}

The 2-section graph $[\cH]_2$ of a hypergraph $\cH=(V(\cH),E(\cH))$ is the graph whose vertex set is $V(\cH)$, and  any two vertices $u$ and $v$ are adjacent in $[\cH]_2$ if and only if   they are adjacent in $\cH$. By definition, $N_{\cH}[v]=N_{[\cH]_2}[v]$ for every vertex $v$, and we have $CNH(\cH)=CNH([\cH]_2)$. Note that $[\cH]_2=\cH$ if $\cH$ is a 2-uniform hypergraph that is a simple graph.

\vspace{4mm}

\subsection*{Transversal number, game transversal number}

\vspace{2mm}

A subset $T$ of vertices in a hypergraph $\cH$ is a \textit{transversal} (also called \textit{hitting set} or \textit{vertex cover} in many papers) if $T$ has a nonempty intersection with every edge of $\cH$. A vertex hits or covers an edge if it belongs to that edge. The \textit{transversal number} $\tau(\cH)$ of $\cH$ is the minimum size of a transversal in $\cH$. In hypergraph theory the concept
of transversal is fundamental and well studied. The major monograph [1] of hypergraph
theory gives a detailed introduction to this topic, for recent results see \cite{BHT,BHT3} and the references therein.

\vspace{2mm}

The \textit{transversal game} played on a hypergraph $\cH$ involves two players, \textit{Edge-hitter} and \textit{Staller}, who take turns choosing a vertex from $\cH$. Each vertex chosen must hit at least
one edge not hit by the vertices previously chosen. We call such a chosen vertex a legal
move in the transversal game. The game ends when the set of vertices chosen becomes a
transversal in $\cH$. Edge-hitter wishes to end the game with the smallest possible number of 
vertices chosen, and Staller wishes to end the game with as many vertices chosen as possible.
The \textit{game transversal number} (resp.\ \textit{Staller-start game transversal number}), $\tau_g(\cH)$ (resp.\
$\tau'_g(\cH)$), of H is the number of vertices chosen when Edge-hitter (resp.\ Staller) starts the
game and both players play optimally.

\vspace{2mm}

Given a hypergraph $\cH$ and a subset $S \subseteq V(\cH)$, we denote by $\cH|S$ the residual hypergraph, in which the vertices contained in $S$ are declared to have been already dominated. That is, $D$ is a dominating set of $\cH|S$ if $N_\cH[D] \cup S=V(\cH)$. We write $\gamma_g(\cH|S)$
(resp.\ $\gamma'_g(\cH|S)$) for the number of further steps in the game
under optimal play when Dominator (resp.\ Staller) has the next turn.
Two games on $\cH_1$ and $\cH_2$ are called \textit{equivalent} if $V(\cH_1)=V(\cH_2)$ and any sequence  $v_1,\dots, v_d$ is a legal game on $\cH_1$ if and only if it is legal on $\cH_2$.

\begin{prop} \label{equiv1}
For every hypergraph $\cH$ and $S \subseteq V(\cH)$ we have the following:
\begin{itemize}
\item[$(i)$] the domination game on $\cH$ is equivalent to the domination game on $[\cH]_2$ and to the transversal game on $CNH(\cH)$;
\item[$(ii)$] $\gamma_g(\cH)=\gamma_g([\cH]_2)=\tau_g(CNH(\cH))$; and
\item[$(iii)$] $\gamma_g(\cH|S)=\gamma_g([\cH]_2|S)$.
\end{itemize}
\end{prop}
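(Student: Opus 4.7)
The plan is to unwind the definitions and observe that, under the identifications in the statement, the two key conditions that govern the game at every turn -- the \emph{legality} of a move and the \emph{end-of-game} condition -- translate into identical conditions in all three games. Once the games themselves are shown to be literally equivalent (same vertex set, same set of legal play-sequences), claims (ii) and (iii) follow for free from the definition of equivalence, since the value under optimal play depends only on the set of legal sequences.

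For (i), first compare the domination games on $\cH$ and on $[\cH]_2$. By construction of the 2-section graph, $N_{\cH}[v]=N_{[\cH]_2}[v]$ for every vertex $v$, hence $N_{\cH}[D_{i-1}]=N_{[\cH]_2}[D_{i-1}]$ for every partial play. Therefore the legality condition $N[v_i]\setminus N[D_{i-1}]\neq\emptyset$ and the terminal condition $N[D_d]=V$ coincide for the two games, which yields the equivalence. For the transversal game on $CNH(\cH)$, I would use the dual reading $v\in N_{\cH}[x]\iff x\in N_{\cH}[v]$: the move $v_i$ hits the edge $N_{\cH}[x]$ of $CNH(\cH)$ precisely when $x\in N_{\cH}[v_i]$. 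Consequently $v_i$ hits an edge not hit by $D_{i-1}$ iff there exists $x\in N_{\cH}[v_i]\setminus N_{\cH}[D_{i-1}]$, which is exactly the legality condition in the domination game on $\cH$. Likewise, $D_d$ is a transversal of $CNH(\cH)$ iff for every $x$ some $v\in D_d$ lies in $N_{\cH}[x]$, i.e.\ $x\in N_{\cH}[D_d]$, i.e.\ $D_d$ is a dominating set of $\cH$. This completes (i).

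Claim (ii) is now immediate: equivalent games have identical sets of legal play-sequences of each length, so optimal strategies for either player exist in one game iff they exist in the other, and they produce the same length. Hence $\gamma_g(\cH)=\gamma_g([\cH]_2)=\tau_g(CNH(\cH))$. For (iii), the residual hypergraph $\cH|S$ simply replaces the end-of-game condition by $N_{\cH}[D]\cup S=V(\cH)$ and the legality condition by $N_{\cH}[v_i]\setminus(N_{\cH}[D_{i-1}]\cup S)\neq\emptyset$; both conditions again depend on $\cH$ only through the closed neighborhoods $N_{\cH}[\cdot]$, which agree with $N_{[\cH]_2}[\cdot]$. Thus the same argument gives $\gamma_g(\cH|S)=\gamma_g([\cH]_2|S)$.

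I do not expect any genuine obstacle here; the only point that requires a moment's care is the symmetric reading of membership $v\in N_{\cH}[x]\iff x\in N_{\cH}[v]$ used to translate "dominating" into "transversal" for $CNH(\cH)$. Everything else is a direct substitution of equal sets into the definitions of legality and termination.
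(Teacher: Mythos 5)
Your proposal is correct and follows essentially the same route as the paper, which simply notes that $N_{\cH}[v]=N_{[\cH]_2}[v]$ for every vertex and lets the definitions do the rest; you merely spell out the symmetric reading $v\in N[x]\iff x\in N[v]$ that the paper leaves implicit. No issues.
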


\begin{proof} For every vertex $v \in V(\cH)$, its closed neighborhood is the same in $\cH$ as in $[\cH]_2$. By definitions, this fact directly implies $(i)-(iii)$.
\end{proof}

By Proposition \ref{equiv1}, several results which were proved for the domination game on graphs can be generalized for the game on hypergraphs. First we state the so-called ``Continuation Principle'' for hypergraphs, which follows from the corresponding results in \cite{KWZ} and also from that in  \cite{BHT3}. Proposition~\ref{5/8} below is the consequence of Proposition \ref{equiv1} and the inequality $\gamma_g(G)\le \frac{5}{8}n$ which holds for every isolate-free graph $G$ \cite{B2017+}.

\begin{corollary}[Continuation Principle for Hypergraphs]
Given a hypergraph $H$ and vertex sets $A$ and $B$ such that $A\subseteq B\subseteq  V(\cH)$, we have $$\gamma_g(\cH|A) \ge \gamma_g(\cH|B).$$
\end{corollary}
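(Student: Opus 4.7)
The plan is to reduce the hypergraph statement to a version that is already known, by exploiting the equivalences that Proposition~\ref{equiv1} has just set up. By part~(iii) of that proposition, $\gamma_g(\cH|A)=\gamma_g([\cH]_2|A)$ and $\gamma_g(\cH|B)=\gamma_g([\cH]_2|B)$. Since $[\cH]_2$ is a simple graph, it suffices to invoke the Continuation Principle for graphs as established in \cite{KWZ}, which asserts exactly $\gamma_g(G|A)\ge \gamma_g(G|B)$ whenever $A\subseteq B\subseteq V(G)$. Combining these two equalities with the graph inequality yields the corollary.

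Alternatively, one can route the argument through the transversal game by applying Proposition~\ref{equiv1}(i)–(ii). The domination game on $\cH|S$ corresponds to the transversal game on the residual closed-neighborhood multi-hypergraph obtained from $CNH(\cH)$ by declaring that the edges $N_\cH[x]$ with $x\in S$ are already hit (equivalently, deleting them). Enlarging $A$ to $B$ then removes only more edges from this residual hypergraph, so the statement becomes a special case of the Continuation Principle for the game transversal number proved in \cite{BHT3}.

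The only thing that requires care is to confirm that in both reductions the notion of ``residual'' matches on the two sides of Proposition~\ref{equiv1}. On the domination side, predominating $S$ means that $D$ is a legal continuation if and only if $N_{\cH}[D]\cup S = V(\cH)$. On the graph side this is literally the same condition, since $N_{\cH}[v]=N_{[\cH]_2}[v]$ for every $v$, so the legality of a move and the terminating condition coincide move-for-move. On the transversal side, the condition $N_\cH[D]\cup S=V(\cH)$ is equivalent to saying that $D$ hits every closed-neighborhood edge $N_\cH[x]$ with $x\notin S$, which is exactly the transversal game on the residual hypergraph. With this bookkeeping done once, no further combinatorial work is needed; the classical imagination-strategy argument of \cite{KWZ} (or \cite{BHT3}) supplies the inequality. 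The main ``obstacle'' is therefore purely notational: ensuring that the two residual constructions correspond under the equivalences of Proposition~\ref{equiv1}, after which the corollary is immediate.
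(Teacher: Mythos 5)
Your proposal is correct and follows essentially the same route as the paper, which derives the corollary from Proposition~\ref{equiv1} together with the known Continuation Principles of \cite{KWZ} (for graphs) and \cite{BHT3} (for the transversal game); your extra bookkeeping about the residual constructions matching is a welcome addition but does not change the argument.
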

 
\begin{prop}\label{5/8}
If $\cH$ is a hypergraph on $n$ vertices, and $\cH$ does not contain any isolated vertices and 1-element edges, then we have
$$\gamma_g(\cH)\le \frac{5}{8}n.$$
\end{prop}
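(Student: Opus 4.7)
The plan is simply to lift the known graph-theoretic bound to the hypergraph setting via Proposition~\ref{equiv1}(ii), which asserts that the game on $\cH$ is equivalent to the game on the 2-section graph $[\cH]_2$. Once this reduction is in place, the only remaining task is to verify that the 2-section graph $[\cH]_2$ satisfies the hypothesis of the graph-theoretic result of \cite{B2017+}, namely that it has no isolated vertices.

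Concretely, I would first invoke Proposition~\ref{equiv1}(ii) to obtain the identity $\gamma_g(\cH)=\gamma_g([\cH]_2)$. Note that $[\cH]_2$ is a graph on the same vertex set as $\cH$, so it has $n$ vertices. Next, I would check that $[\cH]_2$ is isolate-free: pick any vertex $v\in V(\cH)$. Since $\cH$ has no isolated vertices, $v$ lies in some edge $e\in E(\cH)$. Since $\cH$ has no 1-element edges, $|e|\ge 2$, so there exists $u\in e\setminus\{v\}$. By the definition of adjacency in $\cH$ and of $[\cH]_2$, the vertices $u$ and $v$ are adjacent in $[\cH]_2$; hence $v$ is not isolated in $[\cH]_2$.

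Having established that $[\cH]_2$ is an isolate-free graph on $n$ vertices, I would apply the bound $\gamma_g(G)\le \frac{5}{8}n$ from \cite{B2017+} to $G=[\cH]_2$, which immediately gives $\gamma_g(\cH)=\gamma_g([\cH]_2)\le \frac{5}{8}n$, as required.

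There is no real obstacle here: the only step that requires a moment's thought is why we need the hypothesis that $\cH$ has no 1-element edges. That assumption is precisely what prevents a vertex $v$ whose only incident edge is the singleton $\{v\}$ from becoming isolated in $[\cH]_2$; without it, $v$ would have no neighbor in $[\cH]_2$ and the graph bound could fail. With the hypothesis in place, the proof is a two-line reduction.
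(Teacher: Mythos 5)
Your proof is correct and follows exactly the route the paper intends: it states that Proposition~\ref{5/8} is a direct consequence of Proposition~\ref{equiv1} and the bound $\gamma_g(G)\le \frac{5}{8}n$ for isolate-free graphs from \cite{B2017+}. Your explicit verification that $[\cH]_2$ is isolate-free (using both hypotheses on $\cH$) is the only detail the paper leaves implicit, and you handle it correctly.
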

Due to the `3/5-conjecture' \cite{KWZ} the game domination number of an isolate-free graph $G$ of order $n$ is expected to be at most $3n/5$. Once this conjecture will be proved, also the upper bound $5n/8$ can be replaced with $3n/5$ in Proposition~\ref{5/8}.

\section{Our results}

We will be interested in the maximum possible value of the ratio $\frac{\gamma_g(\cH)}{|V(\cH)|}$ over all $k$-uniform hypergraphs.

\subsection*{Results on $k$-uniform hypergraphs}

Our first theorem establishes a bound on $\frac{\gamma_g(\cH)}{|V(\cH)|}$ for all $k$-uniform hypergraphs. We also show that it is asymptotically sharp as $k$ tends to infinity.

\begin{theorem}\label{gdkunif}
\textbf{(a)} If $\cH_k$ is a sequence of $k$-uniform hypergraphs, then $$\frac{\gamma_g(\cH_k)}{|V(\cH_k)|}\le (2+o_k(1))\frac{\log k}{k}.$$

\textbf{(b)} There exists a sequence $\cH_k$ of $k$-uniform hypergraphs with $$\frac{\gamma_g(\cH_k)}{|V(\cH_k)|}\ge (2-o_k(1))\frac{\log k}{k}.$$
\end{theorem}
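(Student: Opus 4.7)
The plan for part~(a) is to reduce to the transversal game via Proposition~\ref{equiv1}, using $\gamma_g(\cH_k) = \tau_g(CNH(\cH_k))$. Assuming $\cH_k$ is isolate-free (otherwise the stated bound can fail when $k$ is large), every edge $N_{\cH_k}[v]$ of $CNH(\cH_k)$ has size at least $k$. It therefore suffices to prove that for every hypergraph $\cG$ on $n$ vertices in which every edge has at least $k$ elements, $\tau_g(\cG) \le (2+o_k(1))\frac{n\ln k}{k}$.

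For this, I would have Edge-hitter play the standard greedy strategy: on each of his turns, select a vertex lying in the largest number of still un-hit edges. The classical Lov\'asz--Johnson analysis yields the upper bound $\tau(\cG) \le (1+\ln k)\,n/k$ for such hypergraphs; I would adapt this to the game setting via a weighted potential of the form $\Phi_t = \sum_{e \text{ un-hit after move } t} w(|e|)$ for a suitable harmonic-type weight $w$, and show that each Edge-hitter greedy move decreases $\Phi$ by at least $1$ on average, while any legal Staller move can only decrease $\Phi$ further (and certainly cannot prolong the game beyond what pure greedy would). This bounds Edge-hitter's moves by $(1+o_k(1))\frac{n\ln k}{k}$. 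Since the two players alternate with Edge-hitter starting, Staller's moves number at most Edge-hitter's plus one, so the total is at most $(2+o_k(1))\frac{n\ln k}{k}$.

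For part~(b), I would exhibit a sequence of $k$-uniform hypergraphs matching the bound. A natural candidate is the random $k$-uniform hypergraph on $n$ vertices where each $k$-subset is independently declared an edge with probability $p=p(n,k)$, tuned so that with high probability $\gamma(\cH_k) \ge (1-o_k(1))\frac{n\ln k}{k}$; this is a standard first-moment/covering-code calculation. To lift this to a game lower bound with an additional factor of $2$, I would describe a Staller strategy that always plays at a vertex of small residual contribution (many such vertices exist whp in a sparse random instance), so that each Staller move forces Dominator to spend a move whose greedy gain is nearly matched by Staller's, effectively doubling the game length. An explicit algebraic construction (for instance, a nearly-regular sparse $k$-uniform design with controlled girth) might give a cleaner deterministic family with the same asymptotics.

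The main obstacle is the factor $2$ in part~(b): the lower bound $\gamma \ge (1-o_k(1))\frac{n\ln k}{k}$ is classical, but wringing an extra factor of $2$ out of Staller's strategy against arbitrary optimal Dominator play requires delicate control---either a concentration argument about Staller's marginal contribution in a random instance, or a design-theoretic construction in which Dominator's greedy has essentially no slack. On the upper-bound side, I also expect the potential-function calculation in part~(a) to require careful choice of weights to pin down the leading constant as $2$ (rather than some larger constant $2+c$).
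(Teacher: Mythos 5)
Your part (a) is sound and arrives at the same bound as the paper, via the same reduction $\gamma_g(\cH_k)=\tau_g(CNH(\cH_k))$ with every edge of $CNH(\cH_k)$ of size at least $k$ (your caveat about isolated vertices is correct, and is implicitly needed in the paper's argument too). The only difference is that you re-run the greedy covering analysis inside the game, whereas the paper simply concatenates two known inequalities: $\tau_g(\cF)\le 2\tau(\cF)-1$ from the game-transversal literature and Alon's bound $\tau(\cF)\le \frac{1+\log k}{k}\,n$ for hypergraphs with all edges of size at least $k$. Your route is more self-contained, but to pin the constant you would still need Alon's refinement giving $\log k$ rather than the logarithm of the number of edges in the potential decay; citing the two ready-made inequalities is cleaner.

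Part (b) has a genuine gap: you correctly identify that the entire difficulty is the factor $2$, but you supply no mechanism for it, and the one you gesture at (Staller ``plays at a vertex of small residual contribution'' in a plain random $k$-uniform hypergraph) does not obviously yield anything beyond the trivial $\gamma_g\ge\gamma\ge(1-o_k(1))\frac{n\log k}{k}$ --- in a generic instance a legal Staller move may still dominate many new vertices and thus help Dominator. The paper's construction contains the missing idea: take Alon's extremal random $(k-1)$-uniform hypergraph $\cH'_{k-1}$ on $\lceil(k-1)\log(k-1)\rceil$ vertices with $k-1$ random edges and $\tau\ge(1-o(1))\log^2 k$, and enlarge each edge $e_i$ by its own private pendant vertex $x_i$ to make the hypergraph $k$-uniform. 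Dominating $x_i$ forces the played set to hit $e_i\cup\{x_i\}$, so a dominating set is essentially a transversal of the core; Staller's strategy is simply: as long as some $e_i$ is disjoint from the played vertices, play the corresponding $x_i$. This move is always legal (that pendant is still undominated) yet provably unproductive with respect to the remaining pendants, so only Dominator's moves make progress on the core transversal. A counting argument, together with a Chernoff-plus-union bound showing that every core set of size about $\log^2 k$ still misses many edges, then shows the game lasts at least $2(1-o(1))\log^2 k$ moves on roughly $k\log k$ vertices --- exactly the factor $2$ you were missing. Without a gadget of this kind, which hands Staller a perpetual supply of legal but wasted moves, your proposal only establishes the $(1-o_k(1))\frac{\log k}{k}$ lower bound.
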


\medskip

\noindent
A vertex set $D \subseteq V(\cH)$ is a \textit{total dominating set} in a hypergraph $\cH$ if $\bigcup_{v\in D} N_\cH(v)=V(\cH)$. The minimum cardinality of a total dominating set is the \textit{total domination number} $\gamma_t(\cH)$ of the hypergraph, introduced in \cite{BHT2014}. If one uses total domination instead of domination (that is open neighborhoods instead of closed ones) in the definition of the game domination number, then it is called the \textit{game total domination number} of a hypergraph, and we denote it by $\gamma_{tg}(\cH)$. Total domination is a well-studied  notion in graphs \cite{HY2013}, the game total domination number is also a recently introduced \cite{HKR2015} active area \cite{B2017, DH-2016, HKR2017}. We could achieve the same result as Theorem \ref{gdkunif} for the game total domination number of $k$-uniform hypergraphs. We state the result, but omit its proof as it is analogous to the proof of Theorem \ref{gdkunif}, which will be given in Section \ref{s:pfkunif}.

\begin{theorem}
\textbf{(a)} If $\cH_k$ is a sequence of $k$-uniform hypergraphs, then $$\frac{\gamma_{tg}(\cH_k)}{|V(\cH_k)|}\le (2+o_k(1))\frac{\log k}{k}.$$

\textbf{(b)} There exists a sequence $\cH_k$ of $k$-uniform hypergraphs with $$\frac{\gamma_{tg}(\cH_k)}{|V(\cH_k)|}\ge (2-o_k(1))\frac{\log k}{k}.$$
\end{theorem}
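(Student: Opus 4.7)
The plan is to mirror the proof of \tref{gdkunif}, replacing closed neighborhoods by open neighborhoods throughout. For a hypergraph $\cH$, define the \emph{open neighborhood hypergraph} $ONH(\cH)$ on vertex set $V(\cH)$ with edge set $\{N_\cH(v) : v\in V(\cH)\}$. The same reasoning that gives \pref{equiv1} yields $\gamma_{tg}(\cH)=\tau_g(ONH(\cH))$, since covering the edge $N_\cH(v)$ of $ONH(\cH)$ means exactly totally dominating $v$ in $\cH$, and the legality conditions on moves match up on both sides. Thus each part of the theorem reduces to a statement about the transversal game on the auxiliary hypergraph $ONH(\cH_k)$, just as \tref{gdkunif} reduces to the transversal game on $CNH(\cH_k)$.

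For part (a), I would first observe that if $\cH_k$ is $k$-uniform and isolate-free, every vertex $v$ lies in some edge $e$ with $|e|=k$, so $N_{\cH_k}(v)\supseteq e\setminus\{v\}$ contains at least $k-1$ elements; hence $ONH(\cH_k)$ has minimum edge size at least $k-1$. The argument used to prove \tref{gdkunif}(a) relies only on the hypothesis that the auxiliary hypergraph has minimum edge size at least $k$, and produces the bound $\tau_g\le(2+o_k(1))\frac{\log k}{k}n$. Applying the same argument to $ONH(\cH_k)$ but with $k$ replaced by $k-1$ yields $\gamma_{tg}(\cH_k)\le(2+o_k(1))\frac{\log(k-1)}{k-1}|V(\cH_k)|$; and since $\frac{\log(k-1)}{k-1}=(1+o_k(1))\frac{\log k}{k}$, this gives the bound claimed in~(a).

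For part (b), I would take the extremal sequence $\cH_k$ constructed in the proof of \tref{gdkunif}(b) (possibly after a minor modification) and check that it still witnesses the matching lower bound for $\gamma_{tg}$. If the original lower bound argument is probabilistic, rerunning the same computation with open neighborhoods of size $k-1$ in place of closed neighborhoods of size $k$ yields the matching bound up to the $o_k(1)$ factor. If it is structural, one convenient option is to replace every $k$-edge $e$ by $e\cup\{\ast_e\}$ for a fresh auxiliary vertex $\ast_e$; this produces a $(k+1)$-uniform hypergraph in which open and closed neighborhoods differ only by the relevant auxiliary vertices, so the Staller strategy from \tref{gdkunif}(b) transfers verbatim to the total domination game.

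The main obstacle is part~(b): one must verify that the construction from the proof of \tref{gdkunif}(b), which is tuned to produce large closed-neighborhood covers, still produces large open-neighborhood covers. Part~(a) shows that replacing closed neighborhoods by open ones only changes the effective minimum edge size from $k$ to $k-1$, a change absorbed in the $o_k(1)$ factor, so the asymptotic rate $\frac{\log k}{k}$ is robust; nevertheless, this is the one step in the argument where the precise details of the Section~4 construction genuinely matter, as opposed to general principles about $ONH(\cH)$.
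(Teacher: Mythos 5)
Your proposal is correct and follows exactly the route the paper intends: the paper omits the proof of this theorem, saying only that it is analogous to that of Theorem~\ref{gdkunif}, and your reduction $\gamma_{tg}(\cH)=\tau_g(ONH(\cH))$ combined with the bounds $\tau_g\le 2\tau-1$ and Alon's estimate applied with minimum edge size $k-1$ is precisely that analogue. Your extra care in part (b) is well placed---in the total game Staller's move $x_i$ is legal only if some vertex of $N(x_i)=e_i$ remains uncovered, which the unmodified construction does not by itself guarantee, and your auxiliary-vertex modification restores a private witness for legality---so you have in effect supplied the one detail that the paper's ``analogous proof'' remark glosses over.
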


\subsection*{Results on 3-uniform hypergraphs}

After determining the asymptotics of the maximum of $\gamma_g$ among $k$-uniform hypergraphs, we concentrate on the domination game on 3-uniform hypergraphs. Denoting by $\cG_{2,3}$  the class of graphs in which each edge ($K_2$) is contained in a triangle ($K_3$), we can observe the following correspondence.

\begin{prop} \label{equiv2}
For every 3-uniform hypergraph $\cH$ there exists a graph $G \in \cG_{2,3}$ and vice versa, for every graph $G$ from $\cG_{2,3}$ there exists a 3-uniform hypergraph $\cH$, such that the domination game on $\cH$ is equivalent to that on $G$.
\end{prop}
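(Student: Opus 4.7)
The plan is to exploit Proposition~\ref{equiv1}(i), which already tells us that the domination game on a hypergraph $\cH$ is equivalent to the one on its $2$-section $[\cH]_2$. So both directions reduce to showing that the class of graphs $\{[\cH]_2 : \cH \text{ is 3-uniform}\}$ coincides (up to the freedom of isolated vertices, which is immaterial to the game) with $\cG_{2,3}$.

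For the forward direction, given a 3-uniform hypergraph $\cH$, I take $G := [\cH]_2$. By Proposition~\ref{equiv1}(i) the two games are equivalent, so I only need to verify $G \in \cG_{2,3}$. Pick any edge $\{u,v\} \in E(G)$. By definition of the 2-section, $u$ and $v$ are adjacent in $\cH$, meaning there is a hyperedge $e \in E(\cH)$ with $\{u,v\}\subseteq e$. Since $\cH$ is 3-uniform, $e = \{u,v,w\}$ for some third vertex $w$, and then each of $\{u,v\},\{v,w\},\{u,w\}$ is an edge of $[\cH]_2$. Hence $\{u,v\}$ lies in the triangle on $\{u,v,w\}$, confirming $G \in \cG_{2,3}$.

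For the reverse direction, given $G \in \cG_{2,3}$, I construct the 3-uniform hypergraph
\[
\cH := \bigl(V(G),\ \{T \subseteq V(G) : T \text{ induces a triangle in } G\}\bigr).
\]
I claim $[\cH]_2 = G$. The inclusion $E([\cH]_2) \subseteq E(G)$ is immediate, since every hyperedge of $\cH$ is (by construction) a triangle in $G$, and the pairs it creates in the 2-section are edges of $G$. For the reverse inclusion, take any $\{u,v\}\in E(G)$; the defining property of $\cG_{2,3}$ gives a vertex $w$ with $\{u,v,w\}$ a triangle of $G$, hence $\{u,v,w\}\in E(\cH)$, so $\{u,v\}\in E([\cH]_2)$. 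Consequently $[\cH]_2 = G$, and Proposition~\ref{equiv1}(i) yields the equivalence of the two games.

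This is essentially a bookkeeping argument built on the already-established equivalence between $\cH$ and $[\cH]_2$; the only step that requires thought is the reverse direction, where the defining property of $\cG_{2,3}$ (every edge lies in a triangle) is precisely what is needed to recover $G$ as the 2-section of the triangle-hypergraph. I do not foresee a real obstacle beyond taking care that isolated vertices of $G$ (which lie in no triangle and hence in no hyperedge of $\cH$) remain isolated in $\cH$, which is automatic and harmless for the game.
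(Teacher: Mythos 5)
Your proof is correct and follows essentially the same route as the paper: take $G=[\cH]_2$ for the forward direction, take the triangle-hypergraph of $G$ for the reverse direction, verify $G=[\cH]_2$ in both cases, and invoke Proposition~\ref{equiv1}(i). You simply spell out the verification of $G\in\cG_{2,3}$ and of $E([\cH]_2)=E(G)$ in more detail than the paper does.
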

\proof Given a 3-uniform hypergraph $\cH$, consider its 2-section graph $[\cH]_2$ and observe that every edge of  $[\cH]_2$ belongs to a triangle. Hence, $[\cH]_2 \in \cG_{2,3}$. On the other hand, given a graph  $G \in \cG_{2,3}$ we can construct a 3-uniform hypergraph $\cH$ on the same vertex set such that three vertices form a hyperedge in $\cH$ if and only if they induce a triangle in $G$. Then, we have $G=[\cH]_2$.  In both cases, by Proposition~\ref{equiv1} $(i)$, the domination game played on $\cH$ is equivalent to that played on $G$. \qed

\bigskip

For any 3-uniform hypergraph $\cH$ on $n$ vertices, $\gamma_g(\cH) \le 3n/5$ follows from Proposition~\ref{equiv2}  and from the recent result of \cite{HK} where Kinnersley and Henning proved the 3/5-conjecture for graphs of minimum degree at least 2.
In Section \ref{pf:main} we will show that the bound on $\gamma_g(\cH)$ can be improved to $5n/9$.

\begin{theorem} \label{5/9}
\begin{itemize}
\item[$(i)$]
If $\cH$ is an isolate-free 3-uniform hypergraph on $n$ vertices, then
$$\gamma_g(\cH)\le \frac{5}{9}n.$$
\item[$(ii)$] 
If $G$ is an isolate-free graph of order
$n$ and each edge of $G$ belongs to a triangle, then
$$\gamma_g(G)\le \frac{5}{9}n.$$
\end{itemize}
\end{theorem}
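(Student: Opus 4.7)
First I reduce $(i)$ to $(ii)$. By Proposition~\ref{equiv2}, for every isolate-free 3-uniform hypergraph $\cH$ the 2-section $G=[\cH]_2$ lies in $\cG_{2,3}$, has $|V(G)|=|V(\cH)|=n$, and the domination games on $\cH$ and $G$ are equivalent. Hence Proposition~\ref{equiv1}$(ii)$ gives $\gamma_g(\cH)=\gamma_g(G)$, and an upper bound of $5n/9$ on the game domination number of isolate-free graphs in $\cG_{2,3}$ immediately yields $(i)$. So the whole game is played on a graph $G\in\cG_{2,3}$ of order $n$, in which every edge lies in a triangle.

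To bound $\gamma_g(G)$, I would use the weight-function/discharging paradigm that Bujt\'as developed for the $5n/8$ bound in \cite{B2017+} and that Henning--Kinnersley adapted in \cite{HK} for the min-degree $2$ case of the $3/5$-conjecture. Concretely, at every moment of the game I would partition the unplayed vertices into a small number of states encoding their local domination pattern: for instance $B$ (unplayed and undominated, with at least one undominated neighbor), $B_0$ (unplayed, undominated, all neighbors dominated), and $W$ (unplayed but already dominated). Each state receives a nonnegative integer weight, chosen so that the initial total weight is at most $Cn$ and so that a careful choice of $C$ together with a provable per-round decrease yields the target $5/9$. Dominator's strategy then prefers a vertex $w$ completing a triangle on two $B$-vertices $u,v$; this exists because every edge of $G$ lies in a triangle, and playing any of $u,v,w$ dominates all three, producing a large weight drop (several vertices migrate $B\to W$ and one migrates to "played", which carries weight $0$). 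Staller's moves, being legal, always dominate at least one new vertex and hence drop the weight by at least the minimum amount allowed by the chosen states.

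The heart of the proof is a case analysis showing that, after any Staller move, Dominator can answer with a move whose weight drop is large enough that the \emph{average} drop over a Dominator--Staller pair is at least $9C/5$; combined with the initial bound $Cn$ this yields $\gamma_g(G)\le 5n/9$. I expect the main obstacle to be the endgame, where only $B_0$-vertices (whose neighborhoods are already dominated) remain: here the triangle structure gives no leverage, because any legal move dominates a single vertex and Dominator cannot do better than Staller. To handle this one would either (a) design the weights so that the "surplus" accumulated during the bulk of the game (where Dominator does enjoy triangle gains) exactly absorbs the endgame inefficiency, or (b) introduce an auxiliary token counting orphaned $B_0$-vertices and charge each such token to a previous Dominator move where a third triangle-vertex became $W$ without being played. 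A further subtlety is that Staller may deliberately dominate a $B$-vertex in a way that converts several of its triangle-mates into $B_0$-vertices, artificially inflating the endgame budget; ensuring the weight drop of such Staller moves is still large enough, and adjusting Dominator's subsequent response, is where I anticipate the bulk of the technical work.
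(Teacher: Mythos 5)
Your reduction of $(i)$ to $(ii)$ via Proposition~\ref{equiv2} and Proposition~\ref{equiv1}$(ii)$ is exactly what the paper does, and your general paradigm --- a potential function with initial value $Cn$, a greedy Dominator, and a per-turn average decrease of at least $9C/5$ --- is also the paper's paradigm (the paper takes $C=20$ and proves an average decrease of $36$ per turn). You have also correctly located the bottleneck: the endgame in which every undominated vertex has all its neighbors dominated (the paper's case (x), where each move removes exactly one star component and gains exactly the minimum $36$).

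However, what you have written is a plan, not a proof, and the gap is precisely the part you defer as ``the bulk of the technical work.'' Two concrete things are missing. First, your three-state partition ($B$, $B_0$, $W$) is too coarse to make the numbers work: the paper's weight function must distinguish blue (dominated, unplayed) vertices by their number of \emph{white} (undominated) neighbors, by whether they sit in a ``special triangle'' (one blue and two white vertices --- this is how the triangle hypothesis is actually cashed in, via the paper's Claim~(a)), and even by whether they were already dominated at the end of an initial high-degree phase (the $f^+$ bonus, needed in case (vii)). The paper also runs \emph{two} phases with different weight tables, re-weighting at the phase boundary, because no single static assignment handles both the high-degree opening and the low-degree endgame. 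Second, the claim that each Dominator--Staller pair of moves loses at least $9C/5$ per turn on average is established in the paper only through a ten-case structural analysis of the residual graph (Lemma~\ref{lem-Ph2}), and nothing in your proposal substitutes for it; without exhibiting explicit weights and verifying the decrease in every configuration, there is no argument that the achievable constant is $5/9$ rather than the $5/8$ or $3/5$ already known. So the approach is the right one, but the proposal does not yet prove the theorem.
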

\medskip
By Proposition~\ref{equiv2}, the two statements are clearly equivalent. In the proof we will consider a graph $G\in \cG_{2,3}$ and assign weights to the vertices which will be changed during the game reflecting on the current status of the vertices. 

\vspace{2mm}

In the case of 3-uniform hypergraphs the best lower bound we can prove is the following:

\begin{prop} \label{3unilower}

There exists a 3-uniform hypergraph $\cH$ with

$$\gamma_g(\cH)=\gamma'_g(\cH)=\frac{4}{9}|V(\cH)|.$$

\end{prop}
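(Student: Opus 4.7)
The plan is to exhibit a single 3-uniform hypergraph $\cH$ on $n=9$ vertices for which $\gamma_g(\cH)=\gamma'_g(\cH)=4$, which gives the desired ratio $\tfrac{4}{9}$. I take $V(\cH)=\{1,2,\ldots,9\}$ with the five hyperedges $\{1,2,3\}$, $\{4,5,6\}$, $\{7,8,9\}$, $\{1,4,7\}$, $\{2,5,8\}$: the three rows together with two of the three columns of the $3\times 3$ array, the third column $\{3,6,9\}$ deliberately omitted. Then $\cH$ is 3-uniform and isolate-free, and its nine closed neighborhoods are easy to compute. The crucial structural property is that the three ``corner'' vertices $3$, $6$, $9$ each lie in exactly one hyperedge, so $N_\cH[3]=\{1,2,3\}$, $N_\cH[6]=\{4,5,6\}$, $N_\cH[9]=\{7,8,9\}$, and in particular no vertex $v$ satisfies $|N_\cH[v]\cap\{3,6,9\}|\geq 2$.

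For the Dominator-start lower bound $\gamma_g(\cH)\geq 4$, I would present the following strategy for Staller. Whatever vertex $v$ Dominator picks first, $v$ lies in some row triple $T$, and by the key property this move dominates exactly one corner (the one in $T$). Staller then answers with any vertex $u\in T\setminus\{v\}$ whose closed neighborhood contains a vertex not yet dominated; a one-line check shows such $u$ always exists and is legal. Since $u\in T$, the move $u$ also dominates only the corner in $T$, so after two moves two of $\{3,6,9\}$ remain undominated. The key property then prevents Dominator's third move from dominating both surviving corners, forcing a fourth move.

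For the Staller-start lower bound $\gamma'_g(\cH)\geq 4$, Staller opens at a corner, say vertex $3$, leaving $6$ and $9$ undominated. I would then argue by cases on Dominator's reply $v$ that Staller has a second move dominating at least one new vertex while still leaving at least one of $\{6, 9\}$ undominated after the third move of the game. The automorphism group of $\cH$ contains $S_3\times\mathbb{Z}_2$, generated by a $3$-cycle permuting the three row triples and the involution swapping columns $\{1,4,7\}$ and $\{2,5,8\}$, so there are only two vertex orbits, namely $\{3,6,9\}$ and $\{1,2,4,5,7,8\}$, and hence very few cases to inspect.

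The matching upper bounds $\gamma_g(\cH),\gamma'_g(\cH)\leq 4$ are obtained by exhibiting Dominator strategies. When Dominator starts, opening with vertex $1$ dominates five vertices at once, and a short case analysis on Staller's reply shows Dominator can finish within two further moves. When Staller starts, Dominator replies with a vertex maximising the number of newly dominated vertices, and a similarly short case analysis confirms that the game ends within three further moves. The main obstacle I anticipate is simply managing the case work cleanly, but the large symmetry group of $\cH$ together with the single structural observation that the three corners have pairwise disjoint closed neighborhoods makes the verification very compact.
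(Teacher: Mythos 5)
Your proposal is correct and follows essentially the same route as the paper: the hypergraph you build (three rows plus two columns of the $3\times 3$ grid) is isomorphic, via transposition, to the paper's ``all but one lines of the $3\times 3$ grid'' example, and both arguments rest on the observation that the three degree-one vertices have pairwise disjoint closed neighborhoods so no single move can dominate two of them, combined with a symmetry-reduced case analysis of both players' strategies. Your write-up is somewhat more explicit about the case checks than the paper's rather terse verification, but the construction and the key idea are the same.
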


\begin{proof}


Let $\cH$ be the hypergraph consisting of all but one lines of the $3\times 3$ grid. Formally, let $V(\cH)=\{1,2\dots,9\}$ and $E(\cH)=\{\{1,2,3\},\{4,5,6\},\{1,4,7\},\{2,5,8\}, \{3,6,9\}\}$. We claim that $\gamma_g'(\cH)=\gamma_g(\cH)=4$. For the ordinary game Staller in his first move must make sure that after his move there still exist 2 non-adjacent vertices that are undominated. Because of symmetry, we may assume that the first move of Dominator is either 1 or 9 (only the degree of the vertex picked matters). In both cases, Staller can pick 4 (it dominates 5 and 6), and 8 and 9 will be the undominated non-adjacent vertices. Dominator may insist on playing vertices of degree two whose no neighbors have been played so far in the game. The Staller start game can be analyzed similarly.
\end{proof}

\section{Proof of Theorem \ref{gdkunif}}
\label{s:pfkunif}

\begin{proof}
To prove \textbf{(a)} we consider an arbitrary $k$-uniform hypergraph $\cH$ on $n$ vertices and its closed neighborhood hypergraph $\cF=CNH(\cH)$. Then, every edge of $\cF$ contains at least $k$ vertices and, by Proposition~\ref{equiv1} $(ii)$ we have $$\gamma_g(\cH)=\tau_g(\cF).$$ Since  $$\tau_g(\cF) \le 2\tau(\cF)-1$$ (a simple proposition from \cite{BHT2}), and $$\tau(\cF)\le \frac{1+\log k}{k}\;n$$ (proved in \cite{Alon}), we have $$\frac{\gamma_g(\cH)}{n}<2\cdot\frac{1+\log k}{k},$$
and we are done with the proof of \textbf{(a)}.

The construction showing \textbf{(b)} is based on Alon's construction from \cite{Alon}. He showed that if the vertex set of $\cH'_k$ is $[\lceil k\log k\rceil]$ and the edge set of $\cH'_k$ consists of $k$ many $k$-subsets of $V(\cH'_k)$ chosen uniformly at random, then the transversal number of $\cH'_k$ is at least $(1-o(1))\log^2k$ with probability tending to 1 as $k$ tends to infinity. Our hypergraph $\cH_k$ is an extension of $\cH'_{k-1}$. Its vertex set is $$V(\cH_k):=[\lceil (k-1)\log (k-1)\rceil+k-1]$$ and its edge set is $$E(\cH_k):=\{e_i \cup[\lceil (k-1)\log (k-1)\rceil+i]:1\le i\le k-1 \},$$ where $e_1,e_2,\dots, e_{k-1}$ are chosen uniformly at random from $$\binom{[\lceil (k-1)\log (k-1)\rceil]}{k-1}.$$

Observe that if $D$ is a dominating set of $\cH_k$, then it is also a transversal of $\cH_k$ as $D$ must contain an element of $e_i$ to dominate $\lceil (k-1)\log (k-1)\rceil+i$.
Therefore Staller's strategy is the following: as long as there is an edge $e_i$ that is disjoint from the set of already chosen vertices, she picks the vertex $\lceil (k-1)\log (k-1)\rceil+i$. 

How long can the game last? As only Dominator selects vertices from $V(\cH'_{k-1})$, after $m$ rounds there are at most $m$ such vertices and at most $2m$ vertices from $V(\cH_k)\setminus V(\cH'_{k-1})$. Therefore if for any $m$-subset $X$ of $V(\cH'_{k-1})$ the number of edges of $\cH'_{k-1}$ that are disjoint from $X$ is at least $2m$, then the game lasts for at least $2m$ steps. As $$\frac{\log (k-1)}{k-1}=(1+o(1))\frac{\log k}{k},$$ the following lemma will finish the proof of \textbf{(b)}.

\begin{lemma}
For any set $X$ of size $\lfloor\log^2k-10\log k\log\log k\rfloor$ there exist at least $\frac{\log^9k}{2}$ edges of $\cH'_k$ that are disjoint from $X$, with probability tending to 1 as $k\to\infty$.
\end{lemma}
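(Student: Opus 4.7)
The plan is a first-moment calculation combined with Chernoff concentration and a union bound over all candidate sets $X$. Fix an arbitrary $X \subseteq V(\cH'_k)$ with $|X| = s := \lfloor \log^2 k - 10 \log k \log\log k \rfloor$ and write $N := \lceil k \log k \rceil$. Since the $k$ edges $e_1,\ldots,e_k$ of $\cH'_k$ are independent uniform $k$-subsets of $[N]$, the count $Y_X := |\{i : e_i \cap X = \emptyset\}|$ has distribution $\mathrm{Bin}(k,p)$ with $p = \binom{N-s}{k}/\binom{N}{k} = \prod_{i=0}^{k-1}\left(1 - s/(N-i)\right)$. Using $-\log(1-x) = x + O(x^2)$ (valid because $s/(N-k) = O(1/\log k)$) together with $\sum_{i=0}^{k-1} 1/(N-i) = \log(N/(N-k)) + O(1/N) = 1/\log k + O(1/\log^2 k)$, I would obtain $-\log p = s/\log k + o(1) = \log k - 10 \log\log k + o(1)$, so $p \geq (1-o(1))(\log k)^{10}/k$ and $\mu := \mathbb{E}[Y_X] = kp \geq (1-o(1))(\log k)^{10}$, which is asymptotically much larger than $\log^9 k / 2$.

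Next, $Y_X$ being a sum of independent Bernoullis, the standard multiplicative Chernoff bound gives $\Pr[Y_X < \log^9 k / 2] \leq \Pr[Y_X < \mu/2] \leq \exp(-\mu/8) \leq \exp(-\Omega(\log^{10} k))$ for all sufficiently large $k$. A union bound over the $\binom{N}{s} \leq (eN/s)^s$ candidate sets $X$ then completes the argument: since $\log \binom{N}{s} \leq s \log(eN/s) = O(s \log k) = O(\log^3 k)$, the probability that \emph{some} $X$ violates the conclusion is at most $\exp(O(\log^3 k) - \Omega(\log^{10} k)) = o(1)$, as required.

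The main obstacle, and the reason for the particular shape of $s$, is the balance between the size of the mean and the union-bound cost. The linear approximation gives $-\log p \approx s/\log k$, so the size $s$ must satisfy $s/\log k = \log k - \Theta(\log\log k)$ to guarantee $p \geq (\log k)^{\Theta(1)}/k$. The constant $10$ in front of $\log k \log\log k$ in the definition of $s$ produces precisely the factor $(\log k)^{10}$ in $\mu$, one logarithmic factor above the target threshold $\log^9 k / 2$; this slack is what a Chernoff deviation needs in order to comfortably absorb the $\exp(O(\log^3 k))$ union-bound penalty, and tightening $s$ to e.g.\ $\log^2 k - O(\log k)$ would not leave enough room.
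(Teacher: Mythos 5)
Your proof is correct and follows essentially the same route as the paper's: a first-moment estimate for the number of edges missing a fixed $X$, a multiplicative Chernoff bound, and a union bound over all candidate sets $X$ (the paper simply cites Alon for the bound $\mathbb{P}(e\cap X=\emptyset)\ge \log^9k/k$, where you derive the estimate directly). One minor imprecision: with $s=\Theta(\log^2k)$ the error in your expansion is $-\log p = s/\log k + O(1)$ rather than $s/\log k+o(1)$ (the term $O(s/\log^2 k)$ coming from $\sum_i 1/(N-i)=1/\log k+\Theta(1/\log^2k)$ is only $O(1)$), so you get $p\ge c(\log k)^{10}/k$ for a constant $c>0$ rather than $(1-o(1))(\log k)^{10}/k$ --- but this harmless constant still leaves $\mu=\Omega(\log^{10}k)\gg \log^9k/2$ and the rest of the argument is unaffected.
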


\begin{proof}[Proof of Lemma]
Let us fix subset $X$ of $V(\cH'_k)$ of size $\lfloor\log^2k-10\log k\log\log k\rfloor$. Alon \cite{Alon} calculated that for a $k$-set $e$ chosen uniformly at random from $\binom{V(\cH'_k)}{k}$ we have $$\mathbb{P}(e\cap X=\emptyset)\ge \frac{\log^9 k}{k}.$$
Therefore if we introduce the random variable $Z_X$ as the number of edges in $\cH'_k$ that are disjoint from $X$, we obtain that $$\mathbb{E}(Z_X)\ge \log^9 k.$$ Applying Chernoff's inequality, we obtain $$\mathbb{P}\left(Z_X<\frac{\log^9k}{2}\right)\le e^{-c\log^9k}$$ for some constant $c > 0$. As the number of subsets of size $\lfloor\log^2k-10\log k\log\log k\rfloor$ is $$\binom{\lceil k\log k\rceil}{\lfloor\log^2k-10\log k\log\log k\rfloor}< (k\log k)^{\log^2k-10\log k\log\log k}< e^{\log^4k}$$ we obtain
\[
\mathbb{P}\left(\exists X:Z_X<\frac{\log^9k}{2}\right)\le e^{log^4k-c\log^9k}=o(1). 
\]
\end{proof}
\noindent
We are done with the proof of Theorem \ref{gdkunif}.
\end{proof}

\section{Proof of Theorem \ref{5/9}}
\label{pf:main}


\subsection*{Colors, residual graphs and weights}

We consider a domination game played on a graph $G^*$ and assume that each vertex and edge of $G^*$ is contained in at least one triangle. The vertex which is played in the $i$th turn of the game will be denoted by $p_i$; we also use the notation $D_i=\{p_1,\dots, p_i\}$ for $i\ge 1$, and set $D_0=\emptyset$. By definition, $N[p_i]\setminus N[D_{i-1}]\neq \emptyset$ holds for each $i$. Note that $p_i$ is played by Dominator if $i$ is odd, otherwise it is played by Staller. We assign colors to the vertices during the game which reflect the current dominated/nondominated status of the vertices and their neighbors. After the $i$th turn of the game, for a vertex $v\in V(G^*)$:
\begin{itemize}
\item $v$ is \emph{white}, if $v\notin N[D_i]$;
\item $v$ is \emph{blue}, if $v\in N[D_i]$ but $N[v] \nsubseteqq N[D_i]$;
\item $v$ is \emph{red}, if $N[v] \subseteqq N[D_i]$.
\end{itemize}
The set of white, blue and red vertices are denoted by $W$, $B$ and $R$, respectively. That is, only the vertices from $B\cup R$ are dominated, and only the vertices from $W\cup B$ might be played in the continuation of the game. Before the first turn, every vertex of $G^*$ is white; in a turn when a vertex $v$ becomes dominated, its color turns from white to blue or red; if all neighbors of a blue vertex $u$ become dominated, $u$ is recolored red. Note that no other types of color-changes are possible. The game ends when $W=B=\emptyset$. If a (white or blue) vertex is played, it will be recolored red, all its white neighbors turn blue or red, and there might be some further blue vertices which are recolored red.

It was observed in several earlier papers (e.g., \cite{KWZ, B1, S}) that the red vertices and the edges connecting blue vertices do not effect the continuation of the game, they can be deleted. After the $i$th turn, deleting these vertices and edges, we obtain the \emph{residual graph} $G_i$  whose vertices are considered together with the assigned white or blue colors. Observe that if a vertex is white in $G_i$, then none of its neighbors in $G^*$ and none of the incident edges were deleted. After the last turn of the game the residual graph is empty (without vertices); we also define $G_0:=G^*$ with  all vertices colored white which is the residual graph at the beginning of the game.

In a residual graph $G_i$, the \emph{W-degree} $\deg_{G_i}^W(v)$ (or simply $\deg^W(v)$) of a vertex $v$ is the number of white neighbors of $v$. Similarly, the \emph{WB-degree} $\deg_{G_i}^{WB}(v)$ equals the number of (white and blue) neighbors in $G_i$. We also introduce the following notations and concept: $W_j$ denotes the set of white vertices of  W-degree $j$; $B_j$ denotes the set of blue vertices of  W-degree $j$; we say that a $K_3$ subgraph of $G_i$ is a \emph{special triangle}, if it contains one blue and two white vertices; $B_T$ denotes the set of those blue vertices which are contained in at least one special triangle.

Given a residual graph $G_i$, we assign a non-negative weight $\w(v)$ to every vertex $v\in V(G_i)$, and the weight of $G_i$ is defined as $\w(G_i)=\sum_{v\in V(G_i)}\w(v)$. The weight assignemnt will be specified such that $\w(G_i) \le \w(G_{i-1})$ holds for any two consecutive residual graphs, and the decrease $\w(G_{i-1})-\w(G_{i})$ will be denoted by $g_i$. We will have $\w(G_0)=20 n$ at the beginning of the game and the weight will be zero at the end. Thus, it is enough to prove that Dominator has a strategy which makes sure that the average decrease of $\w(G_i)$ in a turn is at least $36$. Throughout, we suppose that Dominator follows a greedy strategy; that is, in each of his turns, he chooses a vertex such that the decrease $g_i$ is maximum possible.

\subsection*{Phase 1}

First, we define Phase 1 and 2 of the domination game. If $i$ is odd and $G_{i-1}$ contains a white vertex $v$ with $\deg^W(v)\ge 4$ or a blue vertex $u$ with $\deg^W(u)\ge 5$, then the $i$th and the $(i+1)$st (if exists) turns belong to Phase 1.
Otherwise, these two turns belong to Phase 2. Observe that once the game is in Phase 2, it will remain in this phase until the end. We also remark that Phase~1 always ends after one of Staller's moves with the only exception when Dominator finishes the game by playing a vertex $v$ with $v \in W$ and $\deg^W(v)\ge 4$, or with $v \in B$ and $\deg^W(v)\ge 5$. 

In Phase 1, the weights of the vertices are determined as shown in Table~1.
\begin{center}
\begin{tabular}{|c|c|}  \hline
\emph{Color of vertex} &  \emph{Weight of vertex} \\
 \hline
white &  $20$ \\
blue & $12$  \\
red & $0$  \\
 \hline
\end{tabular}
\end{center}
\begin{center}
\textbf{Table~1.} The weights of vertices in Phase 1 according to their color.
\end{center}

\begin{lemma} \label{lem-Ph1}
If $i$ is odd and the $i$th turn belongs to Phase~1, then either $g_i+g_{i+1} \ge 72$ or the game ends with the $i$th turn and $g_i > 36$.
\end{lemma}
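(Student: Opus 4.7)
The plan is a direct weight-count that exploits Dominator's greedy strategy together with the availability of a high-degree move in Phase 1. Since the $i$th turn belongs to Phase 1, by definition $G_{i-1}$ contains either a white vertex $v$ with $\deg^W(v)\ge 4$ or a blue vertex $u$ with $\deg^W(u)\ge 5$. Dominator plays greedily, so it suffices to exhibit one such move that already yields $g_i\ge 52$; his actual move can only do better.

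First I would check that playing either type of vertex produces $g_i\ge 52$. If Dominator plays a white $v$ with $d=\deg^W(v)\ge 4$, then $v$ itself is recolored red and drops from weight $20$ to $0$, while each of its (at least four) white neighbors is recolored blue or red, dropping by at least $20-12=8$; this already gives $g_i\ge 20+4\cdot 8=52$. If Dominator plays a blue $u$ with $\deg^W(u)\ge 5$, then $u$ drops by $12$ and each of its (at least five) white neighbors drops by at least $8$, giving $g_i\ge 12+5\cdot 8=52$. Any additional white-to-red jumps or blue-to-red jumps of neighboring vertices only increase $g_i$.

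If the $i$th turn ends the game, we immediately obtain $g_i\ge 52>36$, which is the second alternative in the lemma. Otherwise Staller must play a legal move $p_{i+1}$ in the residual graph $G_i$ (which is still in Phase 1 by the remark immediately before the lemma, so the same weight scheme applies). I would then show that any legal Staller move satisfies $g_{i+1}\ge 20$: if Staller plays a white vertex, that vertex alone drops from $20$ to $0$; if she plays a blue vertex $u$, then $u$ drops by $12$, and since her move is legal and $u$ is already dominated, some white neighbor of $u$ must be newly dominated and drop by at least $8$, for a total of at least $20$ again. Combining, $g_i+g_{i+1}\ge 52+20=72$, as required.

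The argument is essentially bookkeeping once the weight drops are set up correctly, so there is no serious obstacle; the only place requiring a little care is verifying that Staller cannot beat the $20$-point lower bound on $g_{i+1}$, which uses the legality condition $N[p_{i+1}]\setminus N[D_i]\neq\emptyset$ in the blue case. One should also note that the greedy assumption on Dominator is what allows us to replace "$g_i$ from this specific eligible vertex" with "$g_i$ from Dominator's actual move," since his chosen decrease is at least the maximum over all legal moves.
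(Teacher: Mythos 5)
Your proposal is correct and follows essentially the same argument as the paper: the same $20+4\cdot 8=52$ and $12+5\cdot 8=52$ bounds for Dominator's greedy move, and the same $\ge 20$ bound for Staller's reply split by the color of the played vertex. No substantive differences.
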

\proof Consider the residual graph $G_{i-1}$, and assume first that $v$ is a white vertex with $\deg^W(v)\ge 4$. If Dominator plays $v$, it will be recolored red and each white neighbor of $v$ turns either blue or red. Hence, the decrease $g_i$ in the weight of the residual graph is at least $20+4(20-12)=52$. Note that Dominator might play another vertex instead of $v$ but, by his greedy strategy, the decrease cannot be smaller than 52. In the other case, if no such white $v$ exists, there is a blue vertex $u$ with $\deg^W(u)\ge 5$. If Dominator selects $u$, it turns red and all the at least 5 white vertices in $N_{G_{i-1}}(u)$ are recolored blue or red. Hence, in this case $g_i \ge 12+5(20-12)=52$ holds again. Now, consider Staller's turn. By the rule of the game, he must dominate at least one new vertex. Then, the played vertex $p_{i+1}$ is either white and turns red, which results in $g_{i+1}\ge 20$, or  $p_{i+1}$ is blue and has a white neighbor $v$. In this latter case, $p_{i+1}$ is recolored red and $v$ is recolored blue or red that yields $g_{i+1}\ge 12+(20-12)=20$. Hence, $g_i+g_{i+1} \ge 72$ and the second part of the statement also follows as $g_i\ge 52$. \qed 

\subsection*{Phase 2}

Assume that the game did not finish in the first phase, and denote by $i^*$ the length of Phase~1. Hence, $i^*$ is even, and Phase~2 begins with the residual graph $G_{i^*}$ in which every white vertex has W-degree of at most 3, and every blue vertex has W-degree of at most 4. First, we change the weight assignment. The weight of a blue vertex $v$ in $G_i$ ($i\ge i^*$) will depend on the following three factors: its W-degree, containment in special triangles, and on the color of $v$ in $G_{i^*}$. From now on, the weight of $v$ in a residual graph $G_i$  is defined as  $\w(v)=f(v)+f^+(v)$, where $f(v)$ is determined according to Table~2, and $ f^+(v)$ is given here:
$$ f^+(v)= \left\{
 \begin{array}{ll}
  2 \enskip & \mbox{if \enskip  $\deg_{G_i}^W(v) \ge 2$ \enskip and \enskip $v$ was blue in $G_{i^*}$;}\\ \\
  
  0 & \mbox{otherwise.}
  \end{array}
 \right.
 $$
\medskip

\begin{center}
\begin{tabular}{|c|c|}  \hline
\emph{Color/Type of vertex $v$ in $G_i$}  & \emph{\quad $f(v)$ \quad} \\
 \hline
$v$ is white &  $20$ \\
$v \in B_4$  & $10$  \\
$v \in (B_3 \cup B_2)\cap B_T$ & $10$  \\
$v \in (B_3 \cup B_2)\setminus B_T$ & $9$  \\
$v \in B_1$  & $8$  \\
$v$ is red  & $0$  \\ \hline
\end{tabular}
\end{center}
\begin{center}
\textbf{Table~2.} Definition of $f(v)$.
\end{center}

By this definition, when the assignment is changed, none of the vertices may have larger weight at the beginning of Phase~2 than it had before at the end of Phase~1.
Further, for any fixed vertex $v$, neither $f(v)$ nor $f^+(v)$ may increase during Phase~2. We remark that the decrease in $f^+(v)$ will be referred to in only one special part of the proof of the following lemma.
\begin{lemma} \label{lem-Ph2}
If $i$ is odd and the $i$th turn belongs to Phase~2, then either $g_i+g_{i+1} \ge 72$ or the game ends with the $i$th turn and $g_i \ge 36$.
\end{lemma}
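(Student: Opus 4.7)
The plan is to carry out a detailed case analysis on the residual graph $G_{i-1}$ at Dominator's turn $i$ in Phase~2, keyed to the maximum W-degree present and the special-triangle structure. In each case, I will specify Dominator's move and show that $g_i$, combined with any legal Staller response, witnesses the claim.

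Two preliminary observations come first. First, the weight function is non-increasing throughout Phase~2: $f(v)$ depends monotonically on $v$'s W-degree and on its $B_T$-membership, both of which can only decrease in Phase~2, while $f^+(v)$ can only drop from $2$ to $0$ once $v$'s W-degree falls below $2$. Second, every legal move in Phase~2 costs at least $18$ in weight: the played vertex joins $D_i$ and becomes red (losing at least $\min f = 8$ if blue, or $20$ if white), and legality forces at least one previously undominated vertex to become dominated (losing at least $20 - 10 = 10$ as its weight drops from $20$ to at most $\max f = 10$). In particular, $g_{i+1} \ge 18$ whenever Staller plays legally.

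Dominator's greedy play is then analyzed case by case. In the rich regime where $W_3 \ne \emptyset$ or $B_4 \ne \emptyset$, playing a maximum-W-degree vertex yields $g_i \ge 50$ directly, and the triangle property of $G^*$ squeezes out an additional drop to reach $g_i \ge 54$: each edge through the played vertex lies in a triangle whose third vertex either produces a secondary W-degree drop on a neighboring blue (moving it down a tier in Table~2) or causes a loss of $B_T$-status or $f^+$-bonus on an incident blue. Combined with $g_{i+1} \ge 18$, this yields $g_i + g_{i+1} \ge 72$. Intermediate cases (maximum W-degree $3$ on the blue side, or maximum W-degree $2$) follow the same template, with the triangle condition supplying the decisive secondary drop. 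For the sparsest residual graphs, in which only $B_1$-vertices and $W_0$-vertices remain, the triangle condition constrains the local structure: the triangle partner in $G^*$ of each residual edge must be red in $G_{i-1}$, so the residual graph is bipartite between $B_1$ and $W_0$ with every blue of residual degree~$1$. Dominator then plays a $W_0$-vertex $w$ of largest blue-degree $k$, giving $g_i = 20 + 8k$; when $k$ is large this closes the bound at once, and if the game terminates on this move the condition $g_i \ge 36$ is satisfied as soon as $k \ge 2$.

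The main obstacle I anticipate is the tight accounting in the lowest-degree sub-cases, where the Table~2 values ($10$ versus $9$ for $B_T$-membership, the $+2$ bonus $f^+$ for phase-boundary blues) are calibrated to a single unit. In particular, small-star sub-cases (a $w$ with only $k = 1, 2, 3, 4$ blue neighbors) do not immediately reach $g_i \ge 54$, so one must analyze the residual that Dominator leaves and argue either that Staller's response is forced into a correspondingly large $g_{i+1}$ by the structure left behind, or that the game terminates with enough drop. Invoking the triangle condition on $G^*$ to restrict how many small stars can coexist, together with carefully tracking the $f^+$ bonus inherited from the phase boundary (so that whenever a phase-boundary blue becomes red or loses a white neighbor, the $+2$ contribution is properly booked into $g_i$ or $g_{i+1}$), are the two technical ingredients needed to close these delicate cases.
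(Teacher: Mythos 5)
Your proposal has the right overall shape --- greedy Dominator, case analysis on the maximum W-degree and the special-triangle structure, weight accounting per turn --- and this is indeed how the paper proceeds. But as written it has a genuine gap exactly where the lemma is hard. Your universal bound $g_{i+1}\ge 18$ only closes the case when Dominator can secure $g_i\ge 54$, and in the low-degree configurations he cannot: when the white vertices form long induced cycles the best guaranteed drop is $g_i\ge 52$, when only $W_0\cup W_1$ and small blue degrees remain it is $g_i\ge 50$ or $51$. The paper closes these cases by proving \emph{sharper lower bounds on Staller's move} that depend on which configurations have already been excluded: $g_{i+1}\ge 19$ once white triangles are excluded, $g_{i+1}\ge 20$ via a separate claim (their Claim~(b), which uses the special-triangle partner of an induced path $vuw$ to find one extra unit), and $g_{i+1}\ge 22$ once all white vertices have W-degree at most~$1$ (here the triangle condition forces a $W_0$-vertex to have at least two blue neighbors whose $f$-values each drop, and a $B_1$-move onto a $W_1$-vertex forces a $2$-unit drop on the blue apex of the adjacent special triangle). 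You explicitly flag this as "the main obstacle I anticipate" and name the right ingredients, but you do not carry out any of these arguments, so the cases $g_i\in\{50,51,52\}$ remain open in your write-up. The $f^+$ bonus also has to be cashed in at one specific point (the white-cycle case, where at least one of the two triangle apexes over the played vertex must have been blue already at the phase boundary, else that vertex would have had W-degree $4$ in Phase~1); this single extra $+2$ is what lifts $50$ to $52$ there, and your proposal does not locate it.

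There is also a concrete error in your treatment of the sparsest endgame. You assert that the triangle partner in $G^*$ of a residual edge $wb$ ($w$ white, $b$ blue) "must be red in $G_{i-1}$." It cannot be red: it is adjacent to the undominated vertex $w$, so its closed neighborhood is not yet dominated. Since it also cannot be white (no white neighbors of $w$ remain), it must be \emph{blue}, and this is precisely what forces every remaining white vertex to have at least two blue neighbors, hence $g\ge 20+2\cdot 8=36$ for \emph{every} move --- Staller's included --- in the all-stars configuration. Without that observation your own claim "$g_i\ge 36$ as soon as $k\ge 2$" has no guarantee that $k\ge 2$, and the non-terminating sub-case $k=2$ (where you need $g_{i+1}\ge 36$, not $18$) is not covered.
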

\proof
We will consider nine cases concerning the structure of the residual graph $G_{i-1}$. They together cover all possibilities which we may have in Phase~2.

\paragraph{(i)} There exist three pairwise adjacent white vertices, say $u$, $v$ and $w$.\\
If one of the three vertices, say $u$, belongs to $W_3$, let us choose $p_i=u$.
Then, $u$ is recolored red; $v$ turns red or blue, but in the latter case it will have only one remaining white neighbor and hence, $f(v)$ decreases by at least 12; the case is similar for $w$; the third neighbor of $u$ is recolored blue or red and hence, its weight decreases by at least 10. Consequently, $g_i \ge 20+ 2\cdot 12 +10=54$.
In the other case, none of $u$, $v$ and $w$ belongs to $W_3$ in $G_{i-1}$. Then, after the choice $p_i=u$, all the three vertices are recolored red and $g_i \ge 60$ follows.
Staller either plays a white vertex which is recolored red and $g_{i+1}\ge 20$, or he plays a blue vertex $v$ with a white neighbor $u$. In this case, $v$ turns red and $f(v)$ decreases by at least 8, while $u$ turns blue or red which results in a decrease of at least 10. Hence, $g_{i+1}\ge 18$ and in any case $g_i + g_{i+1}\ge 54+18=72$ follows.
\medskip

For the remaining cases, we observe that if (i) is not true for $G_{i-1}$, there is no  triangle induced by three white vertices. Then, if a white vertex $v$ is recolored blue,  $v$ cannot be contained in any special triangles and $f(v)$ decreases by at least 11.

\paragraph{(ii)} $W_3 \cup B_4 \neq \emptyset$, but (i) does not hold.\\
If there exists a $v\in W_3$ and Dominator plays it, $f(v)$ decreases by 20. Further, each  white neighbor of $v$ is recolored either blue or red and its weight decreases by at least 11. Therefore, $g_i \ge 20 + 3 \cdot 11 =53$. Similarly, if there exists a vertex $u \in B_4$ and it is played, $u$ is recolored red, and each white neighbor turns blue or red. This yields $g_i \ge 10 + 4 \cdot 11 =54$.
Concerning Staller's turn, if he plays a white vertex, $g_{i+1}\ge 20$ immediately follows. If he plays a blue vertex $v$ with a white neighbor $u$, $f(v)$ decreases by at least 8 and $f(u)$ decreases by at least 11. Hence, $g_{i+1}\ge 19$ and $g_i + g_{i+1}\ge 53+19=72$ that proves the lemma for the case (ii).
\medskip

From now on, we will assume that (i) and (ii) are not valid. Hence, in $G_{i-1}$, every white vertex is of W-degree 0, 1 or 2, and every blue vertex is of W-degree 1, 2, or 3.

\paragraph{(iii)} There exists an edge $uv$ with $u\in W_1$ and $v\in W_2$  in $G_{i-1}$, but (i) and (ii) are not valid.\\
Assume that Dominator plays $v$. Then $v$ is recolored red and $f(v)$ decreases by 20. Since $u$ had only one white neighbor, namely $v$, and in this turn $u$ and $v$ become  dominated, $u$ is also recolored red and $f(u)$ decreases by 20. Consider the other white neighbor $w$ of $v$. Since $\deg_{G_{i-1}}^W(w) \le 2$, in $G_i$ either $w$ is red or it is a blue vertex with $\deg_{G_{i}}^W(w)= 1$. Hence, $f(w)$ decreases by at least 12. By the condition of our theorem, the edge $vw$ was contained in a triangle in the graph $G^*=G_0$. Let us denote by $w'$ its third vertex. In $G_{i-1}$, $w'$ cannot be red (because it has white neighbors) and cannot be white because our present condition excludes the case (i). So, $w'$ belongs to $(B_2 \cup B_3)\cap B_T$ in $G_{i-1}$. When  $v$ and $w$ become dominated, $w'$ is either recolored red or recolored blue with $\deg_{G_{i}}^W(w')= 1$. Therefore, $f(w')$ decreases by at least 2.
The case is similar for the edge $vu$. For the blue vertex $u'$, which is adjacent to both $v$ and $u$, $f(u')$ decreases by at least 2, and if $w'=u'$, it becomes red and the decrease is higher. Thus, $g_i \ge 20 + 20+ 12+2\cdot 2 =56$. Concerning Staller's move, the same argumentation as given in (ii) proves $g_{i+1}\ge 19$. We may conclude that $g_i + g_{i+1}\ge 56+19=75$  holds.

\medskip

Observe that if none of (i)--(iii) is valid, then each component of the subgraph $G_{i-1} [W]$ induced by the white vertices in $G_{i-1}$ is either a cycle of length at least $4$, or $K_2$ or $K_1$. 

\paragraph{(iv)} There exist three pairwise adjacent vertices $u$, $v$, $w$ in $G_{i-1}$ such that $u,v \in W_1$ and $w\in B_3$, but none of (i)--(iii) are valid.\\
If Dominator plays $w$ (that is a blue vertex in a special triangle), then $u$, $v$ and $w$ are recolored red. Moreover, the third white neighbor $x$ of $w$ turns blue. Hence, 
$g_i \ge 10 + 2 \cdot 20+ 11 =61$, Since $g_{i+1}\ge 19$, we have $g_i + g_{i+1}\ge 61+19=80$.

\paragraph{(v)} There exists a blue vertex $v$ which belongs to two different special triangles, and none of (i)--(iv) are valid.\\
Since (i) and (ii) are not valid and $v$ is contained in two special triangles, $N(v)$ induces a path $u_1u_2u_3$ with $u_1,u_2,u_3$ being white vertices. If Dominator plays $v$, then $u_2$ and $v$ are recolored red, while $u_1$ and $u_3$  become blue vertices with a W-degree of 1. Then, $g_i \ge 10 + 20 + 2\cdot 12 =54$ follows. In the next turn $g_{i+1}\ge 19$ holds again and we conclude $g_i + g_{i+1}\ge 54+19=73$.
\medskip

At this point we prove separately that if none of (i)--(v) hold, then the following two claims are true.
\paragraph{Claim (a)} If $u$ and $v$ are two adjacent white vertices in $G_{i-1}$, then there exists a blue vertex $w$ such that $f(w)$ decreases if at least one of $u$ and $v$ becomes dominated. In particular, if $u,v\in W_1$, $f(w)$ decreases by at least 2.\\
Indeed, by the assumption of our theorem, the edge $uv$ was contained in a triangle $uvw$ in $G^*$. In $G_{i-1}$, the vertex $w$ cannot be red since $u$ and $v$ are undominated; and $w$ cannot be white as case (i) is excluded. Hence, $uvw$ is a special triangle, $w\in B_T$, and $f(w)= 10$ in $G_{i-1}$. Since case (v) is also excluded, $uvw$ is the only special triangle incident to $w$. When  $u$ or/and $v$ become blue or red, $w$ will not be contained in any special triangles anymore and $f(w) \le 9$ will be valid. 
Further, as the case (iv) is excluded,  if $u,v \in W_1$ in $G_{i-1}$, then $w\in B_2$. If $u$  becomes dominated, $w$ will be either red or a blue vertex of degree 1. Thus, $f(w)$ decreases by at least 2.

\paragraph{Claim (b)} The weight of the graph decreases by at least 20 in every turn.\\
This statement clearly holds if a white vertex is played. Assume that a vertex $v \in B$ is selected. If $\deg_{G_{i-1}}^W(v)$ equals 2 or 3, the decrease in $\w(G_{i-1})$ would be at least $9+2\cdot 11=31$.  Also, if $v$ has a white neighbor $u$ such that  $\deg_{G_{i-1}}^W(u)=0$, the decrease is $g_i \ge 8+20=28$. Hence, we may assume that there is an induced path $vuw$ such that $v\in B_1$ and $u, w \in W$. Then, the edge $uw$ belongs to a special triangle $uww'$. When $v$ is played, $v$ is recolored red and $f(v)$ decreases by 8; $u$ is recolored blue and $f(u)$ decreases by at least 11 as it cannot be contained in any special triangles in $G_{i}$. Moreover, $w'$ also becomes a  vertex which is not contained in any special triangles, therefore $f(w')$ decreases by at least  1. These imply $g_i \ge 8+11+1=20$.

\paragraph{(vi)} There exists an edge $vu$ in $G_{i-1}$ such that  $v \in W_0$, $u\in B_3$, and none of (i)--(v) are valid.\\
Let $w_1$ and $w_2$ be the further white neighbors of $u$ and assume that Dominator plays $u$. Clearly, if $w_i \in W_0$ (for $i=1$ or 2), at least three vertices, namely $u$, $v$ and $w_i$ are recolored red and the decrease in the weights is $g_i \ge 9+2\cdot 20 +11 >52$. Now, assume that $w_1$ and $w_2$ are adjacent, i.e.\ $w_1w_2u$ is a special triangle. Then, $f(u)=10$ and $w_1, w_2 \in  W_2$ in $G_{i-1}$. After $u$ is selected, $f(u)$ decreses by $10$, $f(v)$ decreases by 20 and both $w_1$ and $w_2$ become a blue vertex of W-degree 1. Hence, $g_i \ge 10+20+2\cdot 11 =52$.
If $w_1$ and $w_2$ are not adjacent and $p_i=u$, then $f(u)$ decreases by 9 and $f(v)$ decreases by 20. First assume that $w_i \in W_1$ (for $i=1$ or $2$). After playing $u$, the vertex $w_i$ will be a blue vertex of W-degree 1. Hence, $f(w_i)$ decreases by 12. In the other case, $w_i \in W_2$ in $G_{i-1}$. Let $x_{i,1}$ and $x_{i,2}$ be the two white neighbors of $w_i$. By Claim (a), there exist blue vertices $y_{i,1}$ and $y_{i,2}$ ($y_{i,1}\neq y_{i,2}$) such that $f(y_{i,1})$ and $f(y_{i,2})$ decreases by at least 1 when $w_i$ becomes blue. Together with the decrease of 11 in $f(w_i)$, it also yields a decrease of $12$ associated to the vertex $w_i$. Hence, when $u$ is not contained in a special triangle,  $g_i \ge 9+20+2\cdot 12=53$. By Claim (b), $g_i \ge 20$ and hence $g_i+g_{i+1}\ge 72$ holds for all cases.

\paragraph{(vii)} There exists a vertex $v \in W_2$ in $G_{i-1}$,  and none of (i)--(vi) hold.\\
The vertex $v\in W_2$ must be contained in a 2-regular component of $G_{i-1}[W]$ which is a cycle $v_1,\dots,v_j$ of length at least 4. Assume that $v=v_1$ and Dominator plays this vertex. Since $v_1$ is recolored red and both $v_j$, $v_2$ belong to $B_1$ in $G_i$, $f(v_1)$, $f(v_j)$
and $f(v_2)$ decrease by 20, 12, 12, respectively. Moreover, consider the blue vertices $w_j$, $w_1$, $w_2$, $w_3$ forming special triangles with the vertex pairs $(v_{j-1},v_j)$, $(v_{j},v_1)$, $(v_{1},v_2)$, and $(v_{2},v_3)$, respectively. By Claim~(a), $f(w_j)$ and $f(w_3)$  decrease by at least 1 each. For $i=1,2$, the W-degree of $w_i$ decreases by 2; and hence, $w_i \in R\cup B_1$ in $G_i$. Thus, $f(w_i)$ decreases by at least 2 for $i=1, 2$. Further, we observe that at least one of $w_1$ and $w_2$ had to be blue already in $G_{i^*}$. Indeed, otherwise the white vertex $v_1$ would be of W-degree 4 in $G_{i^*}$ that contradicts the definition of Phase~1. Hence, $f^+(w_1)+f^+(w_2)\ge 2$ in $G_{i-1}$ and, as they both are contained in $R\cup B_1$ in $G_i$, $f^+(w_1)+f^+(w_2)=0$ in $G_i$. We infer that $g_i \ge 20+2\cdot 12 +2\cdot 1 +2\cdot 2+2=52$ and we have $g_i+g_{i+1}\ge 52+20=72$.

\medskip

Consequently, if none of (i)--(vii) hold, $W=W_0 \cup W_1$ in the residual graph $G_{i-1}$.
Under the same condition, we prove next that $g_j \ge 22$ holds for every $j \ge i$.
Assume that a vertex $p_j$ is played in $G_{j-1}$. If $p_j\in W_1$, the vertex $p_j$ and its white neighbor are recolored red and $g_j \ge 40$. If $p_j \in W_0$, it has at least two blue neighbors, since it was contained in a triangle in $G_0$. Further, as case (vi) is excluded, these blue neighbors, namely $u_1$ and $u_2$, are from $B_1\cup B_2$. Hence, when $p_j$ is recolored red, each of  $f(u_1)$ and $f(u_2)$  changes either from 9 to 8, or from 8 to 0. Hence, $g_j \ge 20+2\cdot 1=22$. If $p_j \in B_2 \cup B_3$, then $p_j$ is recolored red and its neighbors are recolored blue or red. This gives $p_j \ge 9+ 2\cdot 12=33$. If $p_j \in B_1$ and its neighbor is from $W_0$, both vertices turn to red and $g_j \ge 8+20$. The only remaining case is when $p_j\in B_1$ and its neighbor $w$ belongs to $W_1$. Denote by $v$ the white neighbor of $w$. By Claim (a), there exists a special triangle incident with the edge $wv$, we denote its blue vertex  by $u$. Since $w$ and $v$ belong to $W_1$ in $G_{j-1}$, by Claim (a),  $f(u)$ decreases by at least 2 when $w$ is recolored blue. In this case, again, we have $g_j \ge 8+12+2=22$ that proves the statement.

\paragraph{(viii)} There exists a vertex $v \in W_1$ in $G_{i-1}$,  and none of (i)--(vii) hold.\\ 
In this case we have a pair $v,u$ of adjacent white vertices. Let $w$ be the third vertex of a triangle which contains the edge $uv$ in $G^*$. As case (vi) is excluded, $w$ is  contained in $ B_2\cap B_T$ in $G_{i-1}$. If Dominator plays $w$, all the three vertices $u$, $v$ and $w$ become  red and $g_i \ge 10+ 2\cdot 20=50$. This implies $g_i+g_{i+1}\ge 50+22 =72$ as stated. 

\medskip
If $(i)$--$(viii)$ do not hold, all the white vertices are of W-degree zero, and by $(vi)$ we may not have blue vertices of degree higher than 2.

\paragraph{(ix)} There exists a vertex $v \in B_2$ in $G_{i-1}$,  and none of (i)--(viii) hold.\\ 
Such a vertex $v$ has two white neighbors, say $u$ and $w$, from $W_0$. Moreover, $u$ and $w$ must have blue neighbors $u', w'$ which are different from $v$. If Dominator plays $v$, all the three vertices $v,u, w$ are recolored red.  Further, the W-degrees of $u'$ and $w'$ are decreased. If $u'=w'$ then $f(u')$ decreases by 9; if $u'\neq v'$, $f(u')$ and $f(w')$ decreases by at least 1 each.  Hence, we have $g_i \ge 9+2\cdot 20 +2= 51$. This implies $g_i+g_{i+1} \ge 51+22=73$.

\paragraph{(x)} None of (i)--(ix) hold.\\ 
Since each white vertex belongs to $W_0$ and each blue vertex is in $B_1$, the residual graph $G_{i-1}$ consists of star components, each of them having white centers and blue leaves. If $v$ is white in $G_{i-1}$, it has no red neighbors and $\deg^{WB}_{G_{i-1}}(u)=\deg_{G^*}(u) \ge 2$. Under the present conditions, with each move (taken by either Dominator or Staller) exactly one component of $G_{i-1}$ becomes completely red. Thus, $g_i\ge 20+2\cdot 8 \ge 36$, and if the game is not finished, also $g_{i+1} \ge 36$ holds.

 We have shown that $g_i +g_{i+1}\ge 72$ and also $g_i \ge 36$ are true in all the ten cases. Since these cases (i)--(x) together cover all possibilities,  Lemma~\ref{lem-Ph2} is proved. \qed
 
 \medskip
 
By  Lemma~\ref{lem-Ph2}, the average decrease of the weight of the residual graph in a turn is at least $36$, if Dominator plays greedily. Since $\w(G_0)=20n$ at the beginning of the game and $\w(G_k)=0$ at the end, Dominator can make sure that $k \le 20n/36$ holds for the length $k$ of the game. Therefore, 
$$ \gamma_g(G^*)\le k  \le \frac{20n}{36}= \frac{5n}{9}$$
that proves Theorem~\ref{5/9-gen} (ii).
By Proposition~\ref{equiv1} (ii), this is equivalent with the statement (i), that finishes the proof of Theorem~\ref{5/9-gen}. \qed 

\subsection*{The Staller-start game}

If Staller starts the game, we can use the same weight assignment as in the proof of Theorem~\ref{5/9} and follow the flow of Lemmas \ref{lem-Ph1} and \ref{lem-Ph2}. The only difference is that we insert a preliminary turn belonging to Phase 1, which contains the first move of Staller. Here, whichever vertex $p_0$ is played by Staller, $p_0$ is recolored red and at least two neighbors are recolored blue. Hence, the corresponding decrease in the weight of $G^*$ is $g_0 \ge 20+2\cdot 8=36$. This implies the following theorem analogous to Theorem~\ref{5/9}.

\begin{theorem} \label{5/9-Staller}
\begin{itemize}
\item[$(i)$]
If $\cH$ is an isolate-free 3-uniform hypergraph on $n$ vertices, then
$$\gamma_g'(\cH)\le \frac{5}{9}n.$$
\item[$(ii)$] 
If $G$ is an isolate-free graph of order
$n$ and each edge of $G$ belongs to a triangle, then
$$\gamma_g'(G)\le \frac{5}{9}n.$$
\end{itemize}
\end{theorem}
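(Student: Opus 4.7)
The plan is to adapt the proof of Theorem~\ref{5/9} essentially verbatim, inserting a single additional step at the start to absorb Staller's opening move. By Proposition~\ref{equiv1}(ii) the two statements of the theorem are equivalent, so I would work with an arbitrary graph $G^* \in \cG_{2,3}$ on $n$ vertices and carry over the Phase~1 and Phase~2 weight assignments from Tables~1 and 2, along with Dominator's greedy rule from his very first move onward. The game still starts from weight $\w(G_0) = 20n$ and ends at weight $0$, so it suffices to certify an average weight drop of at least $36$ per turn.

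The new ingredient is a bound on $g_1$, the weight loss caused by Staller's opening vertex $p_1$. Because $G^*$ is isolate-free and every edge lies in a triangle, each vertex of $G^*$ has degree at least $2$. Whichever $p_1$ Staller chooses, $p_1$ turns red while at least two of its white neighbors are recolored blue or red; under the Phase~1 weights (white $=20$, blue $=12$) this yields $g_1 \ge 20 + 2(20-12) = 36$, so Staller's preliminary turn already meets the target on its own.

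From turn~$2$ onward, Dominator is to move, and the original Lemmas~\ref{lem-Ph1} and \ref{lem-Ph2} apply without change: their hypotheses only require that Dominator plays greedily on the current residual graph, and their case analyses depend only on the local structure of that graph, not on how many earlier turns were taken or by whom. Pairing Dominator's turn $2j$ with Staller's response in turn $2j+1$ therefore gives $g_{2j} + g_{2j+1} \ge 72$; and in the exceptional case where Dominator ends the game alone in turn $2j$ we still have $g_{2j} \ge 36$. Summing over the $k$ turns of the Staller-start game, the total weight loss is at least $36 + 36(k-1) = 36k$, so $20n \ge 36k$ and hence $k \le 5n/9$. This gives $\gamma_g'(G^*) \le 5n/9$ and, via Proposition~\ref{equiv1}(ii), also $\gamma_g'(\cH) \le 5n/9$.

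The main thing to verify carefully is that the weight reassignment at the Phase~1/Phase~2 boundary still behaves correctly: the correction term $f^+(v)$ is defined via the colour of $v$ at the end of Phase~1, not via the parity of the turn that ended Phase~1, so swapping Dominator's opening for Staller's opening leaves monotonicity of $\w(G_i)$ and every inequality used in Lemma~\ref{lem-Ph2} intact. Consequently, beyond simply quoting the original lemmas, the only genuinely new calculation required for the Staller-start game is the single bound $g_1 \ge 36$ above, which is the expected main (but very modest) obstacle.
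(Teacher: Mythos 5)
Your proposal is correct and matches the paper's own argument: the paper likewise inserts a preliminary Phase-1 turn for Staller's opening move, uses the minimum degree $\ge 2$ guaranteed by the triangle condition to get a weight drop of at least $20+2\cdot 8=36$ on that turn, and then reuses Lemmas~\ref{lem-Ph1} and \ref{lem-Ph2} unchanged. Your extra remark about the $f^+$ term at the Phase~1/Phase~2 boundary is a sensible sanity check that the paper leaves implicit, but it introduces no new content.
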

\medskip

\section{Conclusions and open problems}

If $\cH$ is a hypergraph and each edge of $\cH$ contains at least three vertices, then in the 2-section graph $[\cH]_2$  every edge is contained in a triangle. Therefore, our Theorem~\ref{5/9} can also be stated in the following more general form.

\begin{theorem} \label{5/9-gen}
If $\cH$ is a hypergraph on $n$ vertices which contains neither isolated vertices nor edges of size smaller than 3, then
$\gamma_g(\cH)\le \frac{5}{9}n.$
\end{theorem}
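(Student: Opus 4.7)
The plan is to reduce the statement directly to the graph version already established in Theorem~\ref{5/9} (ii), using the 2-section graph as a bridge. By Proposition~\ref{equiv1} (ii), the equality $\gamma_g(\cH)=\gamma_g([\cH]_2)$ holds on the same vertex set, so it suffices to verify that $G:=[\cH]_2$ meets the hypotheses of Theorem~\ref{5/9} (ii): namely, that $G$ is isolate-free and that every edge of $G$ lies in a triangle.

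For the first condition, take any vertex $v\in V(\cH)$. Since $\cH$ has no isolated vertices, $v$ belongs to some hyperedge $e\in E(\cH)$, and because $|e|\ge 3$ there is at least one other vertex $u\in e\setminus\{v\}$. Then $u$ is adjacent to $v$ in $\cH$, hence $uv\in E(G)$, showing $v$ is not isolated in $G$. For the triangle condition, take any edge $uv\in E(G)$. By the definition of the 2-section, there exists a hyperedge $e\in E(\cH)$ with $\{u,v\}\subseteq e$; the hypothesis $|e|\ge 3$ provides a third vertex $w\in e$, and the pairs $\{u,w\}$ and $\{v,w\}$ both lie in $e$, so $uw, vw\in E(G)$ and $uvw$ forms a triangle in $G$.

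With both conditions verified, Theorem~\ref{5/9} (ii) applies to $G$, giving $\gamma_g(G)\le \tfrac{5}{9}n$. Combined with $\gamma_g(\cH)=\gamma_g([\cH]_2)=\gamma_g(G)$, this yields the desired bound. There is no real obstacle in this argument: the theorem is a direct corollary of Theorem~\ref{5/9} (ii), with the only content being the simple structural observation that passing to $[\cH]_2$ preserves the isolate-free property and places every edge inside a triangle exactly because hyperedges have size at least three.
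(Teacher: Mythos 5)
Your proof is correct and is essentially the paper's own argument: the paper likewise derives Theorem~\ref{5/9-gen} by observing that in $[\cH]_2$ every edge lies in a triangle because hyperedges have size at least $3$, and then invoking Theorem~\ref{5/9}~(ii) together with Proposition~\ref{equiv1}~(ii). Your write-up just spells out the isolate-free check and the triangle check a bit more explicitly than the paper does.
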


In particular, also for every isolate-free $k$-uniform hypergraph with $k \ge 4$, the $5n/9$ upper bound is valid. For $k\ge 5$ we can give better estimates by earlier results. Indeed, since  the 2-section graph of a 5-uniform hypergraph has minimum degree $\delta\ge 4$ and for such graphs $\gamma_g \le 37n/72 < 0.5139 n$ was proved in \cite{B2}, we have the same upper bound on the game domination number of 5-uniform hypergraphs. Similarly, by the results of \cite{B2}, for 6-uniform hypergraphs $\gamma_g \le 2102n/4377 < 0.4803 n$ holds; and for 7-uniform hypergraphs  $\gamma_g < 0.4575 n$. On the other hand, we do not have evidence that any of these upper bounds, including the bound $5n/9$ in Theorem~\ref{5/9}, is tight. 

\begin{problem} Improve the above upper bounds for the game domination numbers of $k$-uniform hypergraphs with $k \ge 3$.
\end{problem}

The best lower bounds that we have for small values of $k$ come from the following constructions:

\vspace{2mm}

\noindent
\textbf{Construction 1} (generalization of the construction given in the proof of Proposition \ref{3unilower}): Let $\cH_{k,1}$ be the hypergraph on $k^2$ vertices with vertex set $V(\cH_{k,1})=\{(i,j):1\le i,j\le k\}$ and edge set $E(\cH_{k,1})=\{\{(i,j):j=j_0\}:1\le j_0\le k\}\cup \{\{(i,j):i=i_0\}:1\le i_0\le k-1\}$ (see Figure \ref{fig:hk1}). In a domination game which is played on $\cH_{k,1}$, Staller may use the following strategy: after any move $(i,j)$ of Dominator he plays an $(i,j')$ if there exists such a legal move. Hence, $\gamma_g(\cH_{k,1})\ge k+ \lfloor\frac{k-1}{2}\rfloor$. On the other hand, while it is possible, Dominator may insist on playing vertices of degree two  no neighbors of which have been played so far in the game. This proves $\gamma_g(\cH_{k,1})= k+ \lfloor\frac{k-1}{2}\rfloor$, and it can be proved that $\gamma_g'(\cH_{k,1})=\gamma_g(\cH_{k,1})$. Note that in the Staller-start version, the optimal first move is a vertex of degree one.

\begin{figure}[h]
 \begin{center}
 \advance\leftskip-4mm
 \includegraphics[width=8.2cm]{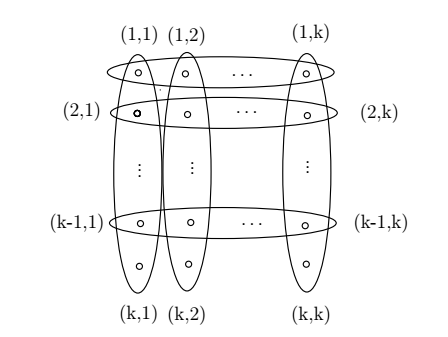}
  \caption{$\cH_{k,1}$}
  \label{fig:hk1}
  \end{center}
\end{figure}

\noindent
\textbf{Construction 2}: Let $\cH_{k,2}$ be the hypergraph on $2k+1$ vertices with vertex set $V(\cH_{k,2}):=\{a,b,c,u_1,u_2,\dots,u_{k-1},v_1,v_2,\dots, v_{k-1}\}$ and edge set $E(\cH_{k,2}):=\{\{a,u_1,u_2,\dots,u_{k-1}\},$  

\noindent
$\{b,u_1,u_2,\dots,u_{k-1}\}, \{b,v_1,v_2,\dots, v_{k-1}\}, \{c,v_1,v_2,\dots, v_{k-1}\}\}.$ It is easy to check that 

\noindent
$\gamma_g(\cH_{k,2})=\gamma_g'(\cH_{k,2})=3$.

For a $k$-uniform hypergraph $\cF$ with vertex set $x_1,\dots,x_t$ take $t$ disjoint copies of $\cH_{k,2}$, namely $\cH_{k,2}^1,\dots ,\cH_{k,2}^t$, and identify $x_i$ with the vertex $b^i$ from $H_{k,2}^i$ for $1\le i \le t$. This way, the $k$-uniform hypergraph $\cF(\cH_{k,2})$ is obtained on $n(2k+1)$ vertices (see Figure \ref{fig:hk2}). In the domination game, Staller can achieve that each $b^i$ is played. Then, at least three vertices must be played from each $\cH_{k,2}^i$ during the game. On the other hand, any reasonable strategy of Dominator can ensure that the game is not longer than  $3t$. Hence, $\gamma_g(\cF(\cH_{k,2}))=\gamma_g'(\cF(\cH_{k,2}))=3t$.

\begin{figure}[H]
 \begin{center}
 \advance\leftskip-4mm
 \includegraphics[width=8.2cm]{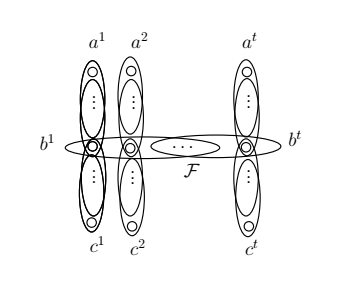}
  \caption{$\cH_{k,2}$}
  \label{fig:hk2}
  \end{center}
\end{figure}

\smallskip

Note that for $k$ odd Construction 1 gives better lower bound while for $k$ even Construction 2 is the stonger one.

\medskip

\section*{Acknowledgement}

This research was supported by the National Research, Development and Innovation Office -- NKFIH, grant SNN 116095.
Research of B.\ Patk\'os was partly supported by the J\'anos Bolyai Research Fellowship of the Hungarian Academy of Sciences.
\smallskip


\begin{thebibliography}{99}

\bibitem{A2007} B. D. Acharya. Domination in hypergraphs. {\it AKCE International Journal of Graphs and Combinatorics}, \textbf{4}(2), 117--126, 2007.

\bibitem{Alon}
N. Alon. Transversal numbers of uniform hypergraphs. {\it Graphs and Combinatorics}, \textbf{6}, 1--4, 1990.

\bibitem{ABBT-}
S. Arumugam, Bibin K. Jose, Cs. Bujt{\'{a}}s, Zs. Tuza.
Equality of domination and transversal numbers in hypergraphs.
\textit{Discrete Applied Mathematics}, \textbf{161}, 1859--1867, 2013.

\bibitem{B1984} C. Berge. \textit{Hypergraphs: Combinatorics of Finite Sets.}
Elsevier, 1984.

\bibitem{BST2007} M. Borowiecki, E. Sidorowicz, Zs. Tuza.  Game list colouring of graphs. \textit{Electron. J. Combin.}, \textbf{14}(1), \#R26, 2007.

\bibitem{BDKK-2017} B. Bre\v{s}ar, P. Dorbec, S. Klav\v{z}ar, G. Ko\v smrlj. 
How long can one bluff in the domination game?
\textit{Discussiones Mathematicae Graph Theory},
\textbf{37}, 337--352, 2017.

\bibitem{BKR-SIAM} B. Bre\v{s}ar, S. Klav\v{z}ar, D. F. Rall.
    Domination game and an imagination strategy.
    {\it  SIAM Journal on Discrete Mathematics},
  {\bfseries 24}, 979--991, 2010.

\bibitem{BKR}
  B. Bre\v{s}ar, S. Klav\v{z}ar, D. F. Rall.
   Domination game played on trees and spanning subgraphs.
   {\it  Discrete Mathematics},
{\bfseries 313}, 915--923, 2013.

\bibitem{B1}
Cs.\ Bujt\'as. Domination game on forests.
  {\it  Discrete Mathematics}, {\bfseries 338}, 2220--2228, 2015.

\bibitem{B2}
Cs.\ Bujt\'as. On the game domination number of graphs with given minimum degree. {\it The Electronic Journal of Combinatorics}, {\bfseries 22}, \#P3.29., 2015.

\bibitem{B2017} Cs. Bujt\'as. On the game total domination number. \textit{Preprint}, arXiv:1706.01157, 2017.

\bibitem{B2017+} Cs. Bujt\'as. General upper bound on the game domination number.  \textit{Manuscript}, 2017.

\bibitem{BHT}  Cs.\ Bujt\'as, M. A. Henning, Zs. Tuza. 
  Transversals and domination in uniform hypergraphs. {\it European Journal of Combinatorics}, {\bfseries 33}(1), 62--71, 2012.
  
\bibitem{BHT2} Cs.\ Bujt\'as, M. A. Henning, Zs. Tuza.
Transversal game on hypergraphs and the $\frac{3}{4}$-conjecture on the total domination game.
 {\it SIAM Journal on Discrete Mathematics}, {\bfseries 30}, 1830--1847, 2016.

\bibitem{BHT3} Cs.\ Bujt\'as, M. A. Henning, Zs. Tuza.
  Bounds on the game transversal number in hypergraphs.
  {\it European Journal of Combinatorics}, {\bfseries 59}, 34--50, 2017.

\bibitem{BHT2014}  Cs.\ Bujt\'as, M. A. Henning, Zs. Tuza, A. Yeo, Total transversals and total domination in uniform hypergraphs. {\it Electronic Journal of Combinatorics}, {\bfseries 21}, \#P2.24, 2014.

\bibitem{BPTV2017} Cs.\ Bujt\'as, B. Patk\'os, Zs. Tuza, M. Vizer. 
The minimum number of vertices in uniform hypergraphs with given domination number. {\it Discrete Mathematics}, \textbf{340}(11), 2704--2713, 2017.

\bibitem{CKOW2013} D. W. Cranston, W. B. Kinnersley, Suil O, D. B. West. Game matching number of graphs. \textit{Discrete Applied Math.} \textbf{161}, 1828--1836, 2013.

\bibitem{DH-2016} P. Dorbec, M. A. Henning.
Game total domination for cycles and paths.\textit{Discrete Applied Mathematics}, \textbf{208}, 7--18, 2016.

\bibitem{DKR-2015} P. Dorbec, G. Ko\v smrlj, G. Renault. 
The domination game played on unions of graphs.
\textit{Discrete Mathematics}, \textbf{338}, 71--79, 2015.

\bibitem{GHK2004} J. A. Grytczuk, M. Ha\'luszczak, H. A. Kierstead. On-line Ramsey theory. \textit{The Electronic Journal of Combinatorics}, \textbf{11}(1), \#R57, 2004.

\bibitem{HHS1998} T. W. Haynes, S. Hedetniemi, P. Slater. 
  \textit{Fundamentals of Domination in Graphs.} CRC Press,
  ISO 690,	1998.

\bibitem{HKR2017} M. A. Henning, S. Klav\v{z}ar, D. F. Rall. The 4/5 upper bound on the game total domination number. \textit{Combinatorica}, \textbf{37}(2), 223--251, 2017.

\bibitem{HKR2015} M. A. Henning, S. Klav\v{z}ar, D. F. Rall. Total version of the domination game. \textit{Graphs and Combinatorics}, \textbf{31}(5), 1453--1462, 2015.  
  
\bibitem{HY2013} M. A. Henning, A. Yeo. \textit{Total Domination in Graphs.} New York: Springer, 2013.

\bibitem{HK} 
M. A. Henning, W. B. Kinnersley.
Domination game: A proof of the 3/5-conjecture for graphs with minimum degree at least two. {\it SIAM Journal on Discrete Mathematics}, {\bfseries 30}, 20--35, 2016.

\bibitem{HL-2012}
M. A. Henning, C. L\" owenstein. Hypergraphs with large domination number and with edge sizes at least three. {\it Discrete Applied Mathematics}, {\bfseries 160}, 1757--1765, 2012.

\bibitem{HL2}
M. A. Henning, C. L\"owenstein. Hypergraphs with large transversal number and with edge
sizes at least four. {\it Central European Journal of Mathematics}, {\bfseries 10}, 1133--1140, 2012.

\bibitem{HL-2016}
M. A. Henning, C. L\"owenstein. A characterization of hypergraphs with large domination number. {\it Discussiones Mathematicae Graph Theory}, {\bfseries 36}, 427--438, 2016.

\bibitem{KLDS}
L. Kang, S. Li, Y. Dong, E. Shan. Matching and domination numbers in $r$-uniform hypergraphs. {\it Journal of Combinatorial Optimization}, {\bfseries 34}(2) 656--659, 2017.

\bibitem{KK2009} H. A. Kierstead, A. V. Kostochka. Efficient graph packing via game colouring. \textit{Combinatorics, Probability and Computing}, \textbf{18}(05), 765--774, 2009.


\bibitem{KWZ} W. B. Kinnersley, D. B. West, R. Zamani.
Extremal problems for game domination number.
{\it  SIAM Journal on Discrete Mathematics},
{\bfseries 27}(4),  2090--2107, 2013.

\bibitem{K-2014}  G. Ko\v smrlj.
Realizations of the game domination number.
\textit{Journal of Combinatorial Optimization}, 
{\bfseries 28}, 447--461, 2014.

\bibitem{PV2014} B. Patk\'os, M. Vizer. Game saturation of intersecting families. \textit{Central European Journal of Mathematics}, \textbf{12}(9), 1382--1389, 2014.

\bibitem{S}
S. Schmidt. The 3/5-conjecture for weakly $S(K_{1,3})$-free
 forests. {\it Discrete Mathematics}, {\bfseries 339}, 2767--2774, 2016.

\bibitem{TZ2014} Zs. Tuza, X. Zhu. Colouring games. Chapter 13 in: \textit{Topics in Chromatic Graph Theory} (L. W. Beineke and R. J. Wilson, eds.), Encyclopedia of Mathematics and Its
Applications 156, Cambridge University Press, 304–-326, 2014.

\end{thebibliography}
\end{document}